\newcommand{\bfx}{\mathbf{x}}
\newcommand{\hdiv}{H(\text{div})}
\newcommand{\mcm}{\mathcal{M}}
\newcommand{\mca}{\mathcal{A}}
\newcommand{\mcb}{\mathcal{B}}
\newcommand{\mcc}{\mathcal{C}}
\newcommand{\mcd}{\mathcal{D}}
\newcommand{\mcl}{\mathcal{L}}
\newcommand{\mcs}{\mathcal{S}}
\def\MM#1{\boldsymbol{#1}}
\newcommand{\pp}[2]{\tfrac{\partial #1}{\partial #2}}
\renewcommand*\env@matrix[1][*\c@MaxMatrixCols c]{%
  \hskip -\arraycolsep
  \let\@ifnextchar\new@ifnextchar
  \array{#1}}
\title{Weighted-norm preconditioners for a multi-layer tide model}
\author{
  Colin J.~Cotter\thanks{Imperial College London, South Kensington Campus;
    London SW7 2AZ;
    Email: colin.cotter@imperial.ac.uk.  Supported by EPSRC EP/R029423/1}
  \and
  Robert C.~Kirby\thanks{Department of Mathematics, Baylor
    University; 1410 S.~4$^\text{th}$ St.; Waco, Texas 76706; Email: robert\_kirby@baylor.edu.  Supported by NSF 1912653 and 1909176.}
  \and
  Hunter Morris\thanks{Department of Mathematics, Baylor
    University; 1410 S.~4$^\text{th}$ St.; Waco, Texas 76706; Email: h\_morris@baylor.edu}
}
\begin{document}

\maketitle

\begin{abstract}
  We derive a linearized rotating shallow water system modeling tides, which can be discretized by mixed finite elements.
  Unlike previous models, this model allows for multiple layers stratified by density.
  Like the single-layer case~\cite{kirby2021preconditioning} a weighted-norm preconditioner gives a (nearly) parameter-robust method for solving the resulting linear system at each time step, but the all-to-all coupling between the layers in the model poses a significant challenge to efficiency.
  Neglecting the inter-layer coupling gives a preconditioner that degrades rapidly as the number of layers increases.
  By a careful analysis of the matrix that couples the layers, we derive a robust method that requires solving a reformulated system that only involves coupling between adjacent layers.
  Numerical results obtained using Firedrake~\cite{Rathgeber:2016} confirm the theory.
\end{abstract}

\begin{keywords}
Block preconditioner, finite element method, tide models
\end{keywords}
\begin{AMS}
65F08, 65N30
\end{AMS}

\section{Introduction}
Accurate modeling of tides plays a critical role in computational geosciences.  Tide models help geologists to understand sediment transport and coastal flooding, and they help oceanographers to study mechanisms for global circulation~\cite{GaKu2007,MuWu1998}.  Finite element methods offer theoretically and computationally robust and efficient discretizations of these methods, and are especially attractive in handling irregular coastlines or topography~\cite{We_etal2010}.  The literature contains many papers~\cite{CoLaReLe2010,CoHa2011,Ro2005,RoRoPo2007,RoBe2009,RoRo2008} studying mixed finite element pairs for discretization of layers of ocean and atmosphere models.  Much of this work relates to dispersion relations and conservation principles, although our work in~\cite{cotter2018mixed,CoKi} focuses on semidiscrete energy estimates related to the damping and corresponding error analysis, including a very broad class of possible nonlinear damping models.

This past work has focused on single-layer tide models derived under a linearization of the shallow water approximation.  Oceans tend to stratify by density according to depth, however, and more involved models can include multiple layers, each of which have different densities and are coupled together via hydrostatic pressure.  A derivation of the fully nonlinear multi-layer depth-averaged equations can be found in~\cite{audusse2005multilayer, mandli2011finite}.  Among many interesting features, these equations can lose hyperbolicity in situations approaching Kelvin-Helmholtz instability~\cite{le2003singularity}.
A further generalization with the number of layers varying spatially appears in~\cite{bonaventura2018multilayer}.
Here, we consider only a linearized model, suitable for tides rather than more general coastal flows, that does not have this difficulty.
We propose a mixed finite element discretization of this linearized multilayer model and develop effective preconditioners along the lines of those given in the single-layer case in~\cite{kirby2021preconditioning}.  The all-to-all coupling of the layers presents computational challenges, and special structure of the coupling matrix turns out to be critical.
We consider systems of equations arising from \emph{implicit} time stepping rather than the explicit methods in~\cite{audusse2005multilayer, bonaventura2018multilayer, mandli2011finite}.
These methods are better-suited for energy conservation in the absence of damping, and can allow large time steps with better stability, but the linear systems required at each time step are quite challenging  to solve.

\section{Model and discretization}
We consider a series of layers of fluid inhabiting a domain $\Omega \in \mathbb{R}^2$, with the top layer having
thickness $D_1$, the next layer $D_2$, and so on until $D_N$ for the
bottom layer with bottom boundary
$z=b(x,y)$.  The density of each layer is denoted by $\rho_i$, and we assume that $\rho_i < \rho_{i+1}$ -- the densities strictly increase between layers down a column.  
Typically, ocean water density varies between 1.02 and 1.07 $\mathrm{g}/\mathrm{cm}^3$.  Hence, we think of the change in density between the top and bottom layers as being small, and hence the density difference between two layers as quite small compared to $\rho_1$.  As a technical assumption that easily covers this case, we posit that
\begin{equation}
\label{eq:rhonnotbig}
\rho_N \leq 2 \rho_1.
\end{equation}

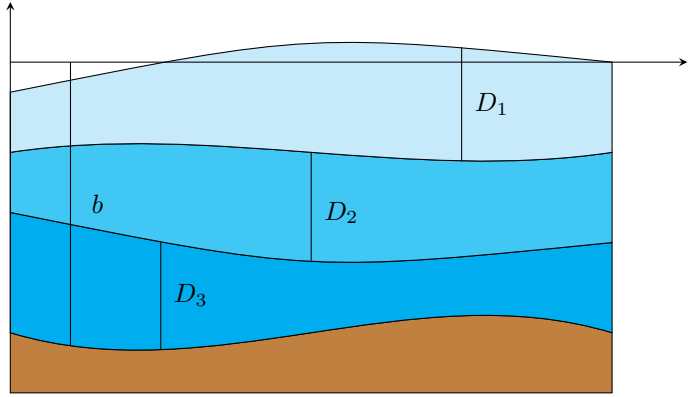
\begin{figure}
    \centering
    \begin{tikzpicture}[scale=2.0]
        \draw[fill=brown] (0, -0.4) -- (4, -0.4) -- 
        (4, 0) .. controls (2.67, 0.4) and (1.33, -0.4) .. (0, 0) -- cycle;
        \draw[fill=cyan] (0, 0) .. controls (1.33, -0.4) and (2.67, 0.4) .. (4, 0) -- 
        (4, 0.6) .. controls (2, 0.4) .. (0, 0.8) -- cycle;
        \draw[fill=cyan!60] (0, 0.8) .. controls (2, 0.4) .. (4, 0.6) --
        (4, 1.2) .. controls (2.67, 1.0) and (1.33, 1.4) .. (0, 1.2) -- cycle;
        \draw[fill=cyan!20] (0, 1.2) .. controls (1.33, 1.4) and (2.67, 1.0) .. (4, 1.2) --
        (4, 1.8) .. controls (2, 2.0) .. (0, 1.6) -- cycle;
        \draw[-stealth] (0, 1.8) -- (4.5, 1.8);
        \draw[-stealth] (0, 0) -- (0, 2.2);
        \draw[] (1, -0.1125) -- (1, 0.6);
        \node[] at (1.2, 0.25) {$D_3$};
        \draw[] (2, 0.48) -- (2, 1.2);
        \node[] at (2.2, 0.8) {$D_2$};
        \draw[] (3, 1.14) -- (3, 1.9);
        \node[] at (3.2, 1.52) {$D_1$};
        \draw[] (0.4, 1.8) -- (0.4, -0.086);
        \node[] at (0.58, 0.857) {$b$};
    \end{tikzpicture}
    \caption{Example of a one-dimensional fluid with three layers.}
    \label{fig:my_label}
\end{figure}
The horizontal fluid velocity within each layer will be denoted by
horizontal velocity $\MM{u}_i$.  We have acceleration due to gravity $g$,
and in this work we take the Coriolis parameter $f$ to be a constant less than 1.

We assume that the pressure is hydrostatic, meaning that the pressure
in each layer $i$ satisfies
\begin{equation}
  \pp{p}{z}|_i = -\rho_i g,
\end{equation}
so $p|_i = -\rho_i gz + c_i$ in each layer.

Using $p=0$ at the top surface, we have
\begin{equation}
  p|_1 = \rho_1 g\left(\sum_{j=1}^N D_j + b - z\right).
\end{equation}
Evaluating this at the bottom of the top layer gives
\begin{equation}
p|_1(z=\sum_{j=2}^N D_j + b)=\rho_1 g D_1.
\end{equation}
Then,
\begin{equation}
  \begin{split}
  	p|_2 &= \rho_2 g \left(\sum_{j=2}^N D_j + b - z + \frac{\rho_1}{\rho_2} \ D_1\right)= \rho_2 g \left(\sum_{j=1}^N D_j + b - z + \frac{\rho_1 - \rho_2}{\rho_2} \ D_1\right).
  \end{split}
\end{equation}
By induction or pattern-matching, we have
\begin{equation}
  \begin{split}
  p|_i &= \rho_i g \left(\sum_{j=i}^N D_j + b - z + \sum_{j=1}^{i-1}\frac{\rho_j}
  {\rho_i} D_j\right)\\
  &= \rho_i g \left(\sum_{j=1}^N D_j + b - z + \sum_{j=1}^{i-1}\frac{\rho_j-\rho_i}{\rho_i}
  D_j\right),
  \end{split}
\end{equation}

Under the assumption that the motion is columnar (that is, horizontal
velocity is independent of $z$) within each layer, the horizontal component of
the momentum equation becomes (after dividing by $\rho_i)$
\begin{equation}
  \begin{split}
	 \pp{\MM{u}_i}{t} + \MM{u_i}\cdot\nabla \MM{u}_i + f\MM{u}_i^\perp
	 = &-g\nabla \left(\sum_{j=1}^N D_j + \sum_{j=1}^{i-1} \frac{\rho_j-\rho_i}{\rho_i}
	 D_j + b\right) \\
	 &- \frac{C_i(\MM{u}_i)}{D_i} + \frac{F(t)}{D_i},
  \end{split}
\end{equation}
where we added a parameterization for bottom drag, with $C_i(\MM u_i))$ the
damping function, and $F(t)$ is the barotropic tidal forcing. The rationale
for the scaling with $D_i$ is that the drag is due to turbulence
assumed to occur in the bottom layer only. This turbulent flow exerts
an effective damping force proportional to the velocity in the bottom
layer, so the depth averaged momentum source is $\int_b^{b+D_N} F(u)
d z$, and then we divide by $D_i$ to get the equation for
$\MM{u}$.

Sometimes a simplified model is used under the rigid lid assumption,
in which we assume that $\sum_{j=1}^ND_j+b$
is constant. This is relevant because typically $(\rho_j-\rho_i)/\rho_i$ is
small, and so there are very fast ``barotropic'' waves where $\MM{u}_i$
is independent of $i$, and much slower ``baroclinic'' waves where the
free surface is more-or-less flat. It is the baroclinic tides that become
interesting since that is where tidally-generated energy is thought to be
dissipated as turbulence away from the bottom boundary.

Now, we nondimensionalize these equations as follows.
We introduce a characteristic vertical length scale $H$, horizontal length scale $L$, and velocity scale $V$.
We also introduce a reference density $\overline{\rho}$.
Then, we make the change of variables
\begin{equation}
	\bfx = \bfx^\prime H, \ \ \ t = \frac{H}{V} t^\prime
\end{equation}
so that
\begin{equation}
	\nabla = \tfrac{1}{H} \nabla^\prime, \ \ \
	\pp{}{t} = \tfrac{V}{H} \pp{}{t^\prime}.
\end{equation}
Then, we introduce dimensionless versions of our quantities as
\begin{equation}
	C_i = \tfrac{V}{L} C^\prime_{i}, \ \ \
	b = H b^\prime, \ \ \
	D_{i} = H D_{i}^\prime, \ \ \ 
	\MM{u}_{i} = V \MM{u}_{i}^\prime, \ \ \
	\rho_{i} = \overline{\rho} \rho^{\prime}_{i}.
\end{equation}
This gives the following non-dimensional equations:
\begin{align}
	\begin{split}
		\tfrac{V^2}{L}\left(\pp{\MM u^\prime_{i}}{t^\prime} + \MM{u}^{\prime}_{i}\cdot \nabla^{\prime}\MM{u}^{\prime}_{i}\right) + V f \MM{u^{\prime \perp}_{i}} = &-\frac{g H}{L}\nabla^{\prime}\left(\sum_{j=1}^{N} D^{\prime}_{j} + \sum_{j=1}^{i-1} \frac{\rho^{\prime}_{j} - \rho^{\prime}_{i}}{\rho^{\prime}_{i}} D^{\prime}_{j} + b^{\prime}\right) \\ &- \frac{V^2}{LD_{i}^\prime}\left(C^\prime_{i}(\MM{u}^{\prime}_i) - F^{\prime}(t^{\prime})\right),
	\end{split} \label{eq: nondim1}\\
	&\tfrac{H V}{L} \pp{D_{i}^{\prime}}{t^{\prime}} + \frac{H V}{L} \nabla^{\prime}\cdot(D_{i} \MM{u}^{\prime}_{i}) = 0, \label{eq: nondim2}
\end{align}
where $F^{\prime}(t^\prime) = \tfrac{L}{V^{2}H}F(\tfrac{L}{V}t^{\prime})$. Dropping the primes, dividing \eqref{eq: nondim1} by $\tfrac{V^{2}}{L}$, and dividing \eqref{eq: nondim2} by $\tfrac{HV}{L}$ produces:
\begin{align}
	\begin{split}
		\pp{\MM{u}_{i}}{t} + \MM{u}_{i}\cdot\nabla\MM{u}_{i} + \epsilon^{-1} \MM{u}^{\perp}_{i} = &-Fr^2 \nabla\left(\sum_{j=1}^{N} D_{j} + \sum_{j=1}^{i-1} \frac{\rho_{j} - \rho_{i}}{\rho_{i}} D_{j} + b\right) \\
	&- \frac{1}{D_{i}}\left(C_i(\MM{u}_{i}) - F(t)\right),
	\end{split}\\
	&\pp{D_{i}}{t} + \nabla\cdot(D_{i} \MM{u}_{i}) = 0,
\end{align}
where $Fr^2 = \tfrac{g H}{V^2}$ is the square of the Froude number, and $\epsilon^{-1} = \tfrac{fL}{V}$ is the reciprocal of the Rossby number.

The steady solutions are $\MM{u}_i=0$ $i=1,\ldots, N$, and
$D_i=\bar{D}_i=$constant for $i<N$, and $D_N - b = \bar{D}_N-b=$constant.  To
linearize, we write $D_i = \bar{D}_i+\eta_i$, where $\bar{D}_i$ is the
thickness of the layer when the system is at rest.  We assume
that $\eta_i$ and $\MM{u}_i$ are small, retaining only the linear terms
in the advection terms as well as replacing $D_i$ by $\bar{D}_i$ in the
forcing terms.  This gives
\begin{align}
  \pp{\MM{u}_i}{t} + \epsilon^{-1} \MM{u}_i^\perp
  = &-Fr^2 \nabla \left(\sum_{j=1}^N \eta_j + \sum_{j=1}^{i-1} \frac{\rho_j-\rho_i}{\rho_i}
  \eta_j\right) - \frac{1}{\bar{D}_{i}}\left(C_i(\MM{u}_{i}) - F(t)\right),\\
  &\pp{\eta_i}{t} + \nabla\cdot \left(\bar{D}_i\MM{u}_i\right) = 0.
\end{align}
Then we make the change of variables $\hat{\MM{u}}_i = \bar{D}_{i}\MM{u}_i$, which makes a kind of momentum rather than velocity the unknown field.
This gives:
\begin{align}
	\frac{1}{\bar{D}_i}\left(\pp{\hat{\MM{u}}_i}{t} + \epsilon^{-1} \hat{\MM{u}}_i^\perp\right)
	= &-Fr^2 \nabla \left(\sum_{j=1}^N \eta_j + \sum_{j=1}^{i-1} \frac{\rho_j-\rho_i}{\rho_i}
	\eta_j\right) - \frac{1}{\bar{D}_{i}}\left(\hat{C}_{i}(\hat{\MM{u}}_{i}) - F(t)\right), \label{eq: almostthere} \\
	&\pp{\eta_i}{t} + \nabla\cdot \hat{\MM{u}}_i = 0,
\end{align}
where $\hat{C}_i(\hat{\MM{u}}_{i}) = C_i\left(\tfrac{\hat{\MM{u}}_i}{\bar{D}_i}\right)$.  Although our model can be formulated with nonlinear damping as in~\cite{cotter2018mixed}, for the rest of the paper we will assume it is linear.

It will be convenient to multiply both sides of \eqref{eq: almostthere} by $\rho_i$. Carrying this out, and dropping the circumflexes, gives:
\begin{align}
	\mu_i\left(\pp{\MM{u}_i}{t} + \epsilon^{-1} \MM{u}_i^\perp\right)
	= -Fr^2 \nabla \left(\sum_{j=1}^{N}\mathcal{A}_{ij}
	\eta_i\right) - \mu_i\left(C_i(\MM{u}_i) - F(t)\right), \label{eq:scalar1}\\
	\pp{\eta_i}{t} + \nabla\cdot \MM{u}_i = 0, \label{eq:scalar2}
\end{align}
where
\begin{equation}
\label{eq:Adef}
\mathcal{A}_{ij} = \rho_{\min\{i, j\}}.
\end{equation}
For each layer, we let $\mu_i = \frac{\rho_i}{\bar{D}_i}$.

Let $\MM{u} = \begin{bmatrix}
	\MM{u}_1 \\
	\vdots \\
	\MM{u}_N
\end{bmatrix}$ and $\MM{\eta} = \begin{bmatrix}
\eta_1 \\
\vdots \\
\eta_N
\end{bmatrix}$.
Then we can write \eqref{eq:scalar1} and \eqref{eq:scalar2} in matrix-vector notation as follows:

\begin{align}
  \label{eq:mlvec}
  \mcm \pp{\MM{u}}{t} + \epsilon^{-1} \mcm \MM{u}^\perp + Fr^2 \nabla \left( \mca \MM{\eta}
  \right)
  + \mcb \MM{u} = F, \\
  \pp{\MM{\eta}}{t} + \nabla \cdot \MM{u} = 0,
\end{align}
where $\mcm$ is the diagonal matrix with $\mcm_{ii} = \mu_i$ and $\mcb$ is a semi-definite diagonal matrix modeling the damping.
Cases of particular interest include the fully definite case, as well as the case where $\mcb$ vanishes in all except the $N,N$ entry, which corresponds to damping only occuring in the bottom layer.

We let $W=L^2(\Omega)$ be the space of square-integrable functions over $\Omega$, with $W_0 = L^2_0(\Omega)$ the subspace of functions with vanishing mean.  
$V = H(\text{div}; \Omega)$ is the space of vector fields over $\Omega$ with square-integrable components and whose divergences are also square-integrable.
$V_0$ the subspace of functions with vanishing normal trace on $\partial \Omega$. 
We also let $W^N = W \times W \dots \times W$ consist of the $N$-way Cartesian product of the space with itself, with similar definitions of $W_0^N$, $V^N$, and $V^N_0$.  These spaces will be used to represent functions mapping $\Omega$ to the disturbances and velocities within each layer.

In the case of vanishing damping, one can apply standard energy techniques similar to wave equations, arrive at stability and well-posedness.
With damping present, one has a non-increasing energy so that we also expect such analysis to carry over.
However, our analysis of the single-layer case in~\cite{CoKi} gave long-time stability and error analysis for semidiscrete methods by showing the system energy is actually damped exponentially.
Similar results should follow readily if the damping is fully positive-definite, but determining the degree to which the results might hold in the semi-definite case is quite interesting.

Throughout, we let \( (\cdot, \cdot) \) denote the $(L^2)^N$ inner product, with
\begin{equation}
    (\MM u, \MM v) = \int_\Omega \sum_{i=1}^N \MM u_i \MM v_i dx.
\end{equation}
Without any subscript on the norm, we let $\|\MM u \| = \sqrt{(\MM u, \MM u)}$ be the standard $L^2$ norm.  For any smooth $\mcs$ mapping $\Omega$ into symmetric and uniformly positive-definite matrices, we also define the $\mcs$-weighted inner product by
\begin{equation}
    (\MM u, \MM v)_\mcs = (\mcs \MM u, \MM v).
\end{equation}
We assume that $\mcs$ is uniformly positive-definite over $\Omega$ so that
\begin{equation}
  \| \MM u \|_\mcs = \sqrt{(\MM u, \MM u)}_{\mcs}
\end{equation}
defines a norm equivalent to the $L^2$ norm with bounds
\begin{equation}
\label{eq:normequiv}
    C_\mcs \| \MM u \| \leq \| \MM u \|_{\mcs} \leq C^\mcs \| \MM u \|
\end{equation}
for some finite positive constants $C_\mcs$ and $C^{\mcs}$.

We also assume that $\mcb$ is bounded in the $L^2$ norm.  That is, there exists some $B^* < \infty$ such that for all $\MM u \in \MM V^N_h$,
\begin{equation}
\label{eq:bbd}
    \| \mcb \MM u \| \leq B^* \| \MM u \|.
\end{equation}

To arrive at a discrete model, we decompose $\Omega$ into a family of quasiuniform meshes $\{\mathcal{T}_h\}_h$ of triangles.  
For some fixed $k \geq 0$, we let $W_h \subset W$ be the space of all functions whose restriction to each $T \in \mathcal{T}_h$ are polynomials of degree $k$, and $V_h$ will consist of a suitable $\hdiv$ finite element space, such as the Raviart-Thomas element~\cite{RavTho77a} or Brezzi-Douglas-Marini~\cite{brezzi1985two} elements.  In the single-layer case, BDM elements may be preferable at small $\epsilon$ due to spurious modes appearing with RT~\cite{CoSh2012}. 
In particular, we assume that the property $\nabla \cdot V_h = W_h$ holds and that there exist suitable commuting projections~\cite{boffi2013mixed} that would enable stability and error analysis to hold.
Decomposition of $\Omega$ into quadrilateral meshes is also possible.  
If the mesh elements are not affine images of a reference square, some accuracy may be lost~\cite{arnold2005quad}. 

We let $V_h^N$ be the finite-dimensional space consisting of vectors of $N$ components, each in $V_h$, and $W_h^N$ with $N$ components in $W_h$.  By seeking a solution $\MM{u}: [0,T] \rightarrow V^N_h$ and $\MM{\eta}:[0,T]\rightarrow W^N_h$, a Galerkin discretization of~\eqref{eq:mlvec} is 
\begin{align}
  \label{eq:weak}
  \left(\pp{\MM{u}}{t}, \MM{v}\right)_\mcm
  +  \epsilon^{-1} \left(\MM{u}^\perp, \MM{v} \right)_\mcm
  - Fr^2\left(\MM{\eta}, \nabla \cdot \MM{v}  \right)_{\mca}
  + \left( \MM{u}, \MM{v} \right)_\mcb = \left(F, \MM v\right), \\
  \left( \pp{\MM{\eta}}{t}, \MM{w} \right)
  + \left( \nabla \cdot \MM{u}, \MM w\right) = 0,
\end{align}
for all $\MM v \in V_h^N$ and $\MM w \in W_h^N$.

To obtain a fully discrete method, we must specify some time-stepping scheme.  
For example, the implicit midpoint rule is symplectic and, in the damping-free case of $\mcb=0$, conserves the system energy exactly for this problem.  
We assume a constant step size $\Delta t$ and define discrete time levels $t_n = n \Delta t$.
Then, given initial conditions $\MM u_h^0$ and $\MM \eta_h^0$, the solution at each time level is approximated by
\begin{align}
  \label{eq:weak_ts}
  \begin{split}
  \left(\frac{\MM{u}^{n+1} - \MM u^n}{\Delta t}, \MM{v}\right)_{\mcm}
  + \epsilon^{-1}\left(\left(\MM{u}^{n+1/2}\right)^\perp, \MM{v} \right)_{\mcm} & \\
 - Fr^2\left(\MM{\eta}^{n+1/2}, \nabla \cdot \MM{v}  \right)_{\mca}
 + \left( \mcb\MM{u}^{n+1/2}, \MM{v} \right) = & \left(F^{n+1/2}, \MM v\right), \\
 \left( \frac{\MM{\eta}^{n+1} - \MM{\eta}^n}{\Delta t}, \MM{w} \right)
 + \left( \nabla \cdot \MM{u}^{n+1/2}, \MM w\right) = & 0,
 \end{split}
\end{align}
where we define $\MM u_h^{n+1/2} = \frac{1}{2} \left(\MM u_h^n + \MM u_h^{n+1} \right)$ and similarly for $\MM \eta_h^{n+1/2}$.
Multiplying through each equation by $\Delta t$ and moving known data to the right-hand side, we see that a variational problem of the form
\begin{equation}
\label{eq:canonicaleq}
    \begin{split}
        \left(\MM u , \MM v \right)_{\mcm}
        + \epsilon^{-1} k \left( \MM u^\perp, \MM v \right)_{\mcm}
        - Fr^2 k \left( \MM \eta , \nabla \cdot \MM v \right)_{\mca}
        + k \left( \mcb \MM u, \MM v \right) & = \left( F_1 , \MM v\right), \\
        \left( \MM \eta , \MM w \right)
        + k \left(\nabla \cdot \MM u , \MM w \right) 
        & = (F_2, \MM w)
    \end{split}
\end{equation}
must be solved at each time step, where $k>0$ is some small number related to the time step.  This equation is fairly generic -- other single-stage methods such as Crank-Nicolson or backward Euler give systems of the same form.  A multi-stage Runge-Kutta method, such as considered in~\cite{farrell2020irksome} for the wave equation, would give a more complicated system, although the diagonal blocks would have this form.

To simplify the analysis, we define the bilinear form
\begin{equation}
\label{eq:a}
\begin{split}
    a\left((\MM u , \MM \eta), (\MM v,  \MM w)\right) = & \left( \MM u , \MM v \right)_{\mcm}
        + \epsilon^{-1} k \left( \MM u^\perp, \MM v \right)_{\mcm}
        - Fr^2 k \left( \MM \eta , \nabla \cdot \MM v \right)_{\mca}
        + k \left( \mcb \MM u, \MM v \right) \\
        & + \left( \MM \eta , \MM w \right)
        + k \left( \nabla \cdot \MM u , \MM w \right),
\end{split}
\end{equation}
and the variational problem~\eqref{eq:canonicaleq} can then be compactly written as finding $(\MM u, \MM \eta) \in \MM V^N_h \times \MM W^N_h$ such that
\begin{equation}
    a\left((\MM u , \MM \eta), (\MM v,  \MM w)\right)
    = \left( F_1 , \MM v \right) + \left( F_2 , \MM w \right)
\end{equation}
for all $(\MM v, \MM w) \in \MM V^N_h \times \MM W^N_h$.

Now, we cast the discrete variational problem~\eqref{eq:canonicaleq} into matrix notation.
We let $\{\psi_i\}_{i=1}^{\dim V_h}$ be a basis for $V_h$.  Then, if $\MM e^j$ is the canonical basis vector in $\mathbb{R}^n$, 1 in entry $j$ and vanishing in other entries, functions of the form
\[
\MM \psi_i^j = \psi_i \MM e^j, \ \ \ 1 \leq i \leq \dim V_h, 1 \leq j \leq N
\]
form a basis for $\MM V_h^N$.  
Similarly, we let $\{ \phi_i \}_{i=1}^{\dim W_h}$ be a basis for $W_h$.  With
\[
\MM \phi_i^j = \phi_i \MM e^j, \ \ \ 1 \leq i \leq \dim V_h, 1 \leq j \leq N,
\]
and $\left\{ \left\{ \MM \phi_i^j \right\}_{i=1}^{\dim W_h} \right\}_{j=1}^N$ forms basis for $\MM W_h$.

In order to define matrices, we need to impose a total ordering on the basis functions for $\MM V_h^N$ and $\MM W_h^N$.  For example, for $1 \leq i \leq N \dim V_h$, we can write find unique $i_0$, $i_1$ such that $i = i_0 \dim V_h + i_1$ by integer division/remainder operations and then put
\[
\MM \Psi_i = \MM \psi_i^j = \psi_{i_1} \MM e^{i_0},
\]
with a similar total ordering for $\{ \Phi_i \}_{i=1}^{N\dim V_h}$.
This ordering imposes a block structure on the linear system by storing all the degrees of freedom within a layer contiguously. 

Before proceeding, give a remark on matrix notation, as several different kinds of matrices appear in this paper.  Matrices that act across the layers of the tide model, such as $\mca$, $\mcb$, and $\mcm$ have been denoted in calligraphic letters.  Discrete operators on a single layer, or equivalently, those discretizing a bilinear form over $V_h$ and/or $W_h$ will be denoted in italics.  To this end, we define:
\begin{equation}
\label{eq:singlelayermatrices}
    \begin{split}
        M^V_{ij} & = \left( \psi_j, \psi_i \right), \\
        M^{V,\kappa}_{ij} & = \left( \kappa \psi_j, \psi_i \right), \\
        \tilde{M}^{V} & = \left(\psi_j^\perp, \psi_i \right), \\
        M^{W}_{ij} & = \left( \phi_j, \phi_i \right), \\
        D_{ij} & = \left( \nabla \cdot \psi_j, \phi_i \right), \\
        E_{ij} & = \left( \nabla \cdot \psi_j, \nabla \cdot \psi_i \right).
    \end{split}
\end{equation}
Then, we use Roman block lettering to denote discrete operators over $\MM V_h^N$ and/or $\MM W_h^N$.  Such needed matrices are:
\begin{equation}
\label{eq:multilayermatrices}
    \begin{split}
        \mathrm{M}^{V}_{ij} & = \left( \MM \Psi_j, \Psi_i \right)_\mcm, \\
        \mathrm{M}^W_{ij} & = \left( \MM \Phi_j, \MM \Phi_i \right),  \\
        \tilde{\mathrm{M}}_{ij} & = \left( \MM \Psi_j^\perp, \MM \Psi_i \right)_\mcm, \\ 
        \mathrm{D}_{ij} & = \left( \nabla \cdot \MM \Psi_j , \MM \Phi_i \right), \\
        \mathrm{D}^{\mca}_{ij} & = \left(\nabla \cdot \MM \Psi_j , \MM \Phi_i \right)_{\mca}, \\
        \mathrm{B}_{ij} & = \left( \mcb \MM \Psi_j , \MM \Psi_i \right), \\
        \mathrm{E}_{ij} & = \left( \nabla \cdot \MM \Psi_j, \nabla \cdot \MM \Psi_i \right), \\ 
        \mathrm{E}^{\mca}_{ij} & = \left(\nabla \cdot \MM \Psi_j, \nabla \cdot \MM \Psi_i \right)_{\mca}.
    \end{split}
\end{equation}
Note that the matrices appearing in~\eqref{eq:multilayermatrices} have important substructure.  For example, we have that
\begin{equation}
\mathrm{M}^V = diag(M^{V,\mu_1}, \dots,  M^{V, \mu_N}).
\end{equation}
The first $N-1$ blocks are in fact constant coefficient and so equal to $\mu_i M^V$.  Due to the variable bathymetry, the bottom right block is not, but it is still symmetric and positive-definite.
The matrix $\mathrm{B}$ is also block diagonal and symmetric semi-definite.  
If the damping matrix $B$ is full-rank, it is definite.
Similarly, $\mathrm{M}^W$, $\tilde{\mathrm{M}}^V$, and $\mathrm{D}$, $\mathrm{E}$ are block diagonal.  In fact, $\mathrm{W}^W = I \otimes M^W$, $\mathrm{D} = I \otimes D$, and $\mathrm{E} = I \otimes E$, where $I$ is the $N \times N$ identity matrix.

The matrices $\mathrm{D}^{\mca}$ and $E^{\mca}$ also have structure, with
\begin{equation}
\begin{split}
    \mathrm{D}^{\mca} & = \mca \otimes D \\
\mathrm{E}^{\mca} & = \mca \otimes E
\end{split}
\end{equation}

A Galerkin discretization of~\eqref{eq:canonicaleq} then gives rise to a block matrix system of the form
\begin{equation}
\label{eq:blocksys}
    \begin{bmatrix}
        \mathrm{M}^V + \epsilon^{-1}k \tilde{\mathrm{M}}^V + k \mathrm{B} & -Fr^2k \left( \mathrm{D}^{\mca} \right)^T \\
        k \mathrm{D} & \mathrm{M}^W
    \end{bmatrix}
    \begin{bmatrix}
    \mathrm{u} \\ \mathrm{\eta}
    \end{bmatrix}    
    =\begin{bmatrix}
    \mathrm{F}_1 \\ \mathrm{F}_2
    \end{bmatrix}
\end{equation}

\section{A weighted-norm preconditioner}
Linear systems arising from finite element discretizations are typically solved using iterative methods such as the generalized minimum residual method (hence, GMRES)~\cite{saad1986gmres}.  
These methods have the advantage of requiring only matrix-vector products with the system matrix, but their performance depends strongly on the conditioning of the linear system.
The conditioning of the system matrix, and hence number of iterations required for convergence, can degrade as a function of mesh refinement and/or physical parameters.
In such cases, it is critical to \emph{precondition} the linear system by pre-multiplying a linear system
\[
A x = b
\]
by some linear operator $P^{-1}$ to obtain the equivalent system
\[
P^{-1} A x = P^{-1} b.
\]
One hopes to choose $P$ such that the iterative method converges much faster for $P^{-1} A$ than that of $A$ under the constraint that
the cost of applying $P^{-1}$ at each iteration not offset the gains obtained by reducing the iteration count.

When preconditioning finite element linear systems, it can be helpful to choose $P$ as discretizing some simple differential operator, such as an inner product on the underlying Hilbert space~\cite{kirby2010functional, mardal2011preconditioning}.
It is also frequently possible to incorporate physical parameters in the definition of the preconditioner in such a way as to minimize the dependence of the spectral bounds on those parameters.
We refer to these as ``weighted-norm'' preconditioners, and we adopt this perspective here.

In this section, we propose and analyze the matrix
\begin{equation}
	\label{eq:precond}
    \begin{bmatrix}
    \mathrm{M}^V + Fr^2 k^2 \mathrm{E}^{\mca} & 0 \\ 0 & \mathrm{M}^W
    \end{bmatrix}
\end{equation}
as a preconditioner for~\eqref{eq:blocksys}.
Because this matrix decouples the momentum and elevation variables, it should be far easier to invert than the original matrix.
The $\mathrm{M}^W$ block is itself quite simple, just a block diagonal matrix of mass matrices (which can be diagonal in the lowest order case).  However, the top left block couples all of the layer velocities together, and we take a closer look at this block in the following section.

This matrix arises from discretizing the bilinear form
\begin{equation}
\label{eq:b}
    b\left((\MM u, \MM \eta), (\MM v, \MM w) \right) =
    \left( \MM u, \MM v \right)_\mcm
    + Fr^2 k^2 \left( \nabla \cdot u , \nabla \cdot v \right)_{\mca} + \left( \MM \eta , \MM w \right)
\end{equation}
over $\MM V^N_h \times \MM W^N_h$.  This bilinear form is equivalent to the standard $\hdiv \times L^2$ inner product, with constants dependent upon the physical parameters.
We will prove norm equivalence by giving continuity and inf-sup bounds of the bilinear form $a$ in~\eqref{eq:a} with respect to the norm defined by the inner product $b$.

We first note that the matrix $\mathrm{D}^{\mca}$ appears in the first row of the system matrix, but $\mathrm{D}$ in the second.  Also, the two blocks are scaled differently with respect to the Froude number.
This structural asymmetry, complicates the analysis.  Rather than scaling the actual system to be solved, we can give analysis for an equivalent pair of bilinear forms.  To motivate this alternate pair, we 
rewrite the preconditioned matrix:
\begin{equation}
\label{eq:redopc}
\begin{split}
    & \begin{bmatrix}
    \mathrm{M}^V + Fr^2 k^2 \mathrm{E}^{\mca} & 0 \\ 0 & \mathrm{M}^W
    \end{bmatrix}^{-1}
    \begin{bmatrix}
        \mathrm{M}^V + \epsilon^{-1} k \tilde{\mathrm{M}}^V + k B & -Fr^2 k \left( \mathrm{D}^{\mca} \right)^T \\
        k \mathrm{D} & \mathrm{M}^W
    \end{bmatrix}
    \\
= & 
\begin{bmatrix}
    \mathrm{M}^V + Fr^2 k^2 \mathrm{E}^{\mca} & 0 \\ 0 & Fr^2 \mathrm{M}^{W,\mca}
\end{bmatrix}^{-1}
    \begin{bmatrix}
        \mathrm{M}^V + \epsilon^{-1} k \tilde{\mathrm{M}}^V + k B & -Fr^2 k \left( \mathrm{D}^{\mca} \right)^T \\
        Fr^2 k \mathrm{D}^{\mca} & Fr^2 \mathrm{M}^{W,\mca}
    \end{bmatrix},
\end{split}
\end{equation}
where we have inserted the identity, written as
\[
\begin{bmatrix}
I & 0 \\ 0 & Fr^2 \mca \otimes I
\end{bmatrix}^{-1}
\begin{bmatrix}
I & 0 \\ 0 & Fr^2 \mca \otimes I
\end{bmatrix}
\]
between the two matrices on the left-hand side.

The second matrix on the right-hand side discretizes of the bilinear form
\begin{equation}
\label{eq:ahat}
\begin{split}
    \hat{a}\left((\MM u , \MM \eta), (\MM v,  \MM w)\right) = & \left( \MM u , \MM v \right)_{\mcm}
        + \epsilon^{-1} k \left( \MM u^\perp, \MM v \right)_{\mcm}
        - Fr^2 k \left( \MM \eta , \nabla \cdot \MM v \right)_{\mca}
        + k \left( \mcb \MM u, \MM v \right) \\
        & + Fr^2 \left( \MM \eta , \MM w \right)_{\mca}
        + Fr^2 k \left( \nabla \cdot \MM u , \MM w \right)_{\mca},
\end{split}
\end{equation}
while the first discretizes the weighted inner product
\begin{equation}
\label{eq:bhat}
    \hat{b}\left((\MM u, \MM \eta), (\MM v, \MM w) \right) =
    \left( \MM u, \MM v \right)_\mcm
    + Fr^2 k^2 \left( \nabla \cdot \MM u , \nabla \cdot \MM v \right)_{\mca} + Fr^2 \left( \MM \eta , \MM w \right)_{\mca}.
\end{equation}
We further define the $\| \cdot \|_{\hat{b}}$ norm on $\MM V_h^N \times \MM W_h^N$ by
\begin{equation}
    \left\| \left( \MM u , \MM \eta \right) \right\|_{\hat{b}} = 
    \sqrt{\hat{b}\left( \left( \MM u , \MM \eta \right), \left( \MM u , \MM \eta \right) \right)}.
\end{equation}

Because of equality~\eqref{eq:redopc}, GMRES iteration for the matrix associated with bilinear form~\eqref{eq:a} preconditioned by that from~\eqref{eq:b} is exactly equivalent to that obtained from the matrices for~\eqref{eq:ahat} and~\eqref{eq:bhat}.  
We proceed to demonstrate norm equivalence for the latter pair.

\begin{theorem}
  \label{thm:upper}
For all $(\MM u, \MM \eta), (\MM v, \MM w)$ in $\MM V_h^N \times \MM W_h^N$, 
\begin{equation}
\hat{a} \left( \left(\MM u, \MM \eta\right), \left(\MM v, \MM w\right) \right)
\leq C \left\|\left(\MM u, \MM \eta\right) \right\|_{\hat{b}}
\left\| \left(\MM v, \MM w\right) \right\|_{\hat{b}},
\end{equation}
where
\begin{equation}
C = \max \left\{ 2, 1 + \tfrac{k}{\epsilon} + \frac{k B^*}{C_\mcm^2} \right\}.
\end{equation}
\end{theorem}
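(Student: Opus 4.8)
The plan is to expand $\hat a$ into its six constituent terms and bound each one by a product drawn from the three squared quantities that comprise $\|\cdot\|_{\hat b}^2$. Introduce the shorthand
\[
X = \|\MM u\|_\mcm, \qquad Y = Fr\,k\,\|\nabla\cdot\MM u\|_\mca, \qquad Z = Fr\,\|\MM\eta\|_\mca,
\]
and $X',Y',Z'$ analogously for $(\MM v,\MM w)$, so that $\|(\MM u,\MM\eta)\|_{\hat b}^2 = X^2+Y^2+Z^2$ and similarly for the test pair.

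First I would dispatch the three terms involving only the velocities. Cauchy--Schwarz in the $\mcm$-inner product (a genuine inner product since $\mcm$ is diagonal with positive entries $\mu_i$) gives $(\MM u,\MM v)_\mcm \le XX'$. Because the pointwise rotation $(\cdot)^\perp$ is an isometry on each layer while $\mcm$ mixes only the layers, $\|\MM u^\perp\|_\mcm = \|\MM u\|_\mcm$, so the Coriolis term is at most $\tfrac{k}{\epsilon}XX'$. For the damping term, \eqref{eq:bbd} gives $\|\mcb\MM u\| \le B^*\|\MM u\|$ and \eqref{eq:normequiv} applied with $\mcs=\mcm$ gives $\|\MM u\| \le C_\mcm^{-1}\|\MM u\|_\mcm$, whence $k(\mcb\MM u,\MM v) \le \tfrac{kB^*}{C_\mcm^2}XX'$. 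For the remaining three terms, which couple $\MM\eta$, $\MM w$, $\nabla\cdot\MM u$ and $\nabla\cdot\MM v$, I would use Cauchy--Schwarz in the $\mca$-inner product --- legitimate because $\mca_{ij}=\rho_{\min\{i,j\}}$ is symmetric positive definite --- obtaining $-Fr^2 k(\MM\eta,\nabla\cdot\MM v)_\mca \le ZY'$, $Fr^2 k(\nabla\cdot\MM u,\MM w)_\mca \le YZ'$, and $Fr^2(\MM\eta,\MM w)_\mca \le ZZ'$.

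Collecting these estimates, with $\alpha := 1 + \tfrac{k}{\epsilon} + \tfrac{kB^*}{C_\mcm^2}$, yields
\[
\hat a\big((\MM u,\MM\eta),(\MM v,\MM w)\big) \;\le\; \alpha XX' + ZY' + YZ' + ZZ',
\]
so the theorem reduces to the purely algebraic inequality $\alpha XX' + ZY' + YZ' + ZZ' \le C\sqrt{X^2+Y^2+Z^2}\,\sqrt{X'^2+Y'^2+Z'^2}$, i.e.\ to estimating the spectral norm of the matrix
\[
\begin{bmatrix} \alpha & 0 & 0 \\ 0 & 0 & 1 \\ 0 & 1 & 1 \end{bmatrix}.
\]
This matrix is block diagonal, so its largest singular value is $\max\{\alpha,\ (1+\sqrt5)/2\}$, which is bounded by $\max\{2,\alpha\}$ since the golden ratio is less than $2$; this gives the stated $C$.

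The Cauchy--Schwarz steps are routine; the only place real care is needed is this final reduction. Summing the six individual bounds naively would give the inferior constant $\alpha+2$. The improvement to $\max\{2,\alpha\}$ comes from recognizing that the three pressure/divergence couplings assemble into a single $2\times2$ block whose spectral norm is the golden ratio rather than $2$ or $3$ --- that is, from estimating the collected bounds as one small matrix instead of term by term.
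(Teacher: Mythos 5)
Your proof is correct and follows the same overall strategy as the paper: expand $\hat a$, apply Cauchy--Schwarz term by term (noting $\perp$ is an isometry and using~\eqref{eq:bbd},~\eqref{eq:normequiv} to fold the damping into the velocity coefficient), reduce to $\alpha XX' + ZY' + YZ' + ZZ'$ with $\alpha = 1 + k/\epsilon + kB^*/C_\mcm^2$, and then close with a discrete estimate. The one place you diverge is the final reduction: the paper regroups the four products as a dot product of two $4$-vectors (repeating $Z$ on one side and $Z'$ on the other) and applies discrete Cauchy--Schwarz, which produces the weight $2$ on the $Z^2$ and $Z'^2$ terms and hence $C = \max\{\alpha, 2\}$; you instead write the sum as $\bfp^\top M \bfq$ with $\bfp = (X,Y,Z)$, $\bfq = (X',Y',Z')$ and bound by the spectral norm of the block-diagonal matrix $M$, obtaining $\max\{\alpha, (1+\sqrt5)/2\}$. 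Your route is a bit cleaner conceptually and actually gives a slightly sharper constant, since the golden ratio is less than $2$; you then correctly relax it to the stated $C$. Both are sound; the matrix-norm view is a nice way to see that the paper's constant $2$ is not tight.
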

\begin{proof}
  Let $(\MM u, \MM \eta), (\MM v, \MM w) \in \MM V^N_h \times \MM W^N_h$ be given.  Then, applying the Cauchy-Schwarz inequality and noting $\cdot^\perp$ is pointwise an isometry, we have
  \begin{equation}
  \begin{split}
          \hat{a}\left((\MM u , \MM \eta), (\MM v,  \MM w)\right) = & \left( \MM u , \MM v \right)_{\mcm}
        + \epsilon^{-1} k \left( \MM u^\perp, \MM v \right)_{\mcm}
        - Fr^2 k \left( \MM \eta , \nabla \cdot \MM v \right)_{\mca}
        + k \left( \mcb \MM u, \MM v \right) \\
        & + Fr^2 \left( \MM \eta , \MM w \right)_{\mca}
        + Fr^2 k \left( \nabla \cdot \MM u , \MM w \right)_{\mca} \\
    \leq & 
  \left( 1 + \tfrac{k}{\epsilon} \right) \| \MM u \|_{\mcm} \| \MM v \|_{\mcm}
  + Fr^2 k \| \MM \eta \|_{\mca} \| \nabla \cdot \MM v \|_{\mca}
  + k \| \mcb \MM u \| \| \MM v \| \\
  & + Fr^2 \| \MM \eta \|_{\mca} \| \MM w \|_{\mca} 
  + Fr^2 k \| \nabla \cdot \MM u \|_{\mca} \| \| \MM w \|_{\mca}.
  \end{split} 
  \end{equation}
  At this point, we use the boundedness of $\mcb$ assumed in~\eqref{eq:bbd} and the norm equivalence of $\| \cdot \|$ and $\| \cdot \|_{\mcm}$ in~\eqref{eq:normequiv} to obtain
  \begin{equation}
      \begin{split}
          \hat{a}\left((\MM u , \MM \eta), (\MM v,  \MM w)\right)
          \leq &
          \left( 1 + \tfrac{k}{\epsilon} + \tfrac{k B^*}{C_\mcm^2} \right) \| \MM u \|_{\mcm} \| \MM v \|_{\mcm}
    + Fr^2 k \| \MM \eta \|_{\mca} \| \nabla \cdot \MM v \|_{\mca} \\
    & + Fr^2 \| \MM \eta \|_{\mca} \| \MM w \|_{\mca} 
    + Fr^2 k \| \nabla \cdot \MM u \|_{\mca} \| \| \MM w \|_{\mca}.
      \end{split}
  \end{equation}
  We can rewrite the right-hand side of this as the inner product of two vectors and apply discrete Cauchy-Schwarz to bound this by
  \begin{equation}
  \begin{split}
  \hat{a}\left((\MM u , \MM \eta), (\MM v,  \MM w)\right)
  & \leq  
  \sqrt{\left(1 + \tfrac{k}{\epsilon} + \frac{kB^*}{C_\mcm^2} \right) \| \MM u \|^2_{\mcm}
  + 2 Fr^2 \| \MM \eta \|^2_{ \mca} + Fr^2 k^2 \| \nabla \cdot \MM u \|^2_{\mca}}
   \\
  & \times \sqrt{\left(1 + \tfrac{k}{\epsilon} + \frac{kB^*}{C_\mcm^2} \right) \| \MM v \|^2_{\mcm}
  + 2 Fr^2 \| \MM w \|^2_{\mca} + Fr^2 k^2 \| \nabla \cdot \MM v \|^2_{\mca}}
    \\
  & \leq  C \left\| \left( \MM u, \MM \eta \right) \right\|_{\hat{b}}
  \left\| \left( \MM v, \MM w \right) \right\|_{\hat{b}}.
  \end{split}
  \end{equation}
\end{proof}
As a remark, it is possible to include the damping term in the inner product $\hat{b}$, in which case the continuity estimate is independent of $B^*$.  
However, typical use cases have small damping and the differences in resulting preconditioner performance are small.  
Furthermore, when damping is nonlinear, omitting it avoids the need to reassemble the preconditioner at each linear iteration.

\begin{theorem}
  \label{thm:lower}
The bilinear form $\hat{a}$ is inf-sup stable with respect to the $\| \cdot \|_{\hat{b}}$ norm with constant no smaller than $\tfrac{1}{2\sqrt{3}}$.
\end{theorem}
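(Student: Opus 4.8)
The standard route to an inf-sup bound for a saddle-point-type form like $\hat{a}$ is: given $(\MM u, \MM \eta)$, construct an explicit test pair $(\MM v, \MM w)$ whose $\hat b$-norm is controlled by $\|(\MM u,\MM\eta)\|_{\hat b}$ and for which $\hat a((\MM u,\MM\eta),(\MM v,\MM w))$ is bounded below by a constant times $\|(\MM u,\MM\eta)\|_{\hat b}^2$. The plan is to take a combination of the ``diagonal'' choice and a ``pressure-recovery'' choice. Concretely, first set $(\MM v_1, \MM w_1) = (\MM u, -\MM \eta)$; then $\hat a((\MM u,\MM\eta),(\MM v_1,\MM w_1)) = \|\MM u\|_{\mcm}^2 + k(\mcb\MM u,\MM u) + Fr^2 k\big[(\nabla\cdot\MM u,\MM\eta)_{\mca} - (\MM\eta,\nabla\cdot\MM u)_{\mca}\big] - \epsilon^{-1}k(\MM u^\perp,\MM u)_{\mcm} \cdot(\text{sign})$; the crucial point is that the skew term $(\MM u^\perp,\MM u)_{\mcm}$ vanishes pointwise (since $\mcm$ is diagonal and $\MM u_i^\perp\cdot\MM u_i = 0$), and the two $\mca$-weighted divergence terms cancel, leaving $\hat a((\MM u,\MM\eta),(\MM u,-\MM\eta)) = \|\MM u\|_{\mcm}^2 + k(\mcb\MM u,\MM u) \ge \|\MM u\|_{\mcm}^2$. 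This controls the velocity part but gives nothing on $\|\MM\eta\|_{\mca}$ or on $\|\nabla\cdot\MM u\|_{\mca}$, which is the content of the $\hat b$-norm we still need to recover.

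To recover the $\MM\eta$ contribution, I would exploit the inf-sup (Fortin) property $\nabla\cdot V_h = W_h$ assumed in the discretization: for $\MM\eta \in W_h^N$ there exists $\MM z \in V_h^N$ with $\nabla\cdot\MM z = \mca\MM\eta$ (componentwise, or after a change of variables using that $\mca$ is invertible — note $\mca_{ij}=\rho_{\min\{i,j\}}$ is symmetric positive-definite), normalized so that $\|\MM z\|_{\mcm} \le \beta^{-1}\|\mca\MM\eta\|$ for the continuous inf-sup/right-inverse constant $\beta>0$ of the $\hdiv$-$L^2$ pair on this mesh family, which also gives $\|\nabla\cdot\MM z\|_{\mca}$ bounded in terms of $\|\MM\eta\|_{\mca}$. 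Then $(\MM v_2,\MM w_2) = (-\MM z, 0)$ produces $\hat a((\MM u,\MM\eta),(\MM v_2,0)) = -(\MM u,\MM z)_{\mcm} - \epsilon^{-1}k(\MM u^\perp,\MM z)_{\mcm} + Fr^2 k(\MM\eta,\nabla\cdot\MM z)_{\mca} - k(\mcb\MM u,\MM z) - Fr^2 k(\nabla\cdot\MM u,\MM z)_{\mca}$; the term $Fr^2k(\MM\eta,\nabla\cdot\MM z)_{\mca} = Fr^2 k \|\MM\eta\|_{\mca}^2$-ish (after handling the $\mca$-weighting carefully) is the good term, and the remaining terms are bounded by the already-controlled $\|\MM u\|_{\mcm}$ and by $\|\MM\eta\|$ times small constants, to be absorbed via Young's inequality. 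Finally the $\|\nabla\cdot\MM u\|_{\mca}$ piece of the $\hat b$-norm: since $\|\nabla\cdot\MM u\|_{\mca} \le \|\nabla\cdot\MM u\|$ and $\nabla\cdot\MM u \in W_h^N$, I can bound it by choosing a further test function $\MM w_3$ proportional to $\nabla\cdot\MM u$ (weighted by $\mca$) in the second equation, which pulls down $Fr^2 k(\nabla\cdot\MM u,\MM w_3)_{\mca} = Fr^2 k\|\nabla\cdot\MM u\|_{\mca}^2$-type term at the cost of a $Fr^2(\MM\eta,\MM w_3)_{\mca}$ term absorbed against the already-recovered $\MM\eta$ control.

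I would then assemble $(\MM v,\MM w) = (\MM v_1,\MM w_1) + \delta_1(\MM v_2,\MM w_2) + \delta_2(0,\MM w_3)$ with small fixed weights $\delta_1,\delta_2$, estimate $\hat a((\MM u,\MM\eta),(\MM v,\MM w))$ from below by chaining the three identities and absorbing cross-terms with Young's inequality, and bound $\|(\MM v,\MM w)\|_{\hat b}$ above using the right-inverse norm bound; optimizing $\delta_1,\delta_2$ yields the stated constant $\tfrac{1}{2\sqrt3}$. The main obstacle I anticipate is the bookkeeping around the $\mca$-weighting: the right-inverse of $\nabla\cdot$ naturally produces $\nabla\cdot\MM z = \MM\eta$ (not $\mca\MM\eta$), so I must either commute $\mca$ past $\nabla\cdot$ (legitimate since $\mca$ has constant entries and acts only in the layer index, while $\nabla\cdot$ acts only spatially — so $\nabla\cdot(\mca\MM z) = \mca(\nabla\cdot\MM z)$) or work throughout in $\mca$-weighted inner products, and I must track how the constants $C_\mcm, C^\mcm$ and the spatial inf-sup constant $\beta$ enter, checking they do not degrade the claimed bound; the cancellation of the skew-symmetric Coriolis and the divergence-coupling terms in the first identity is what makes the constant clean and independent of $\epsilon$ and $Fr$.
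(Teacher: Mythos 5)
Your opening identity is wrong, and this derails the rest of the plan. With $(\MM v_1,\MM w_1)=(\MM u,-\MM\eta)$ in $\hat a$, the divergence-coupling terms do \emph{not} cancel: you get $-Fr^2 k(\MM\eta,\nabla\cdot\MM u)_{\mca}$ from the first row and $Fr^2 k(\nabla\cdot\MM u,-\MM\eta)_{\mca}=-Fr^2 k(\MM\eta,\nabla\cdot\MM u)_{\mca}$ from the second, so they reinforce rather than cancel, and you also pick up $-Fr^2\|\MM\eta\|^2_{\mca}$ from $Fr^2(\MM\eta,-\MM\eta)_{\mca}$. Altogether
\begin{equation*}
\hat a\bigl((\MM u,\MM\eta),(\MM u,-\MM\eta)\bigr)=\|\MM u\|^2_{\mcm}+k(\mcb\MM u,\MM u)-Fr^2\|\MM\eta\|^2_{\mca}-2Fr^2 k(\MM\eta,\nabla\cdot\MM u)_{\mca},
\end{equation*}
which is not even sign-definite. (Had you taken $\MM w_1=+\MM\eta$, the $\mca$-coupling terms \emph{would} cancel and you would get the favourable term $+Fr^2\|\MM\eta\|^2_{\mca}$ for free, which makes the whole Fortin/right-inverse construction in your second step unnecessary.)

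Beyond the sign error, your route is genuinely different from the paper's and more cumbersome. The paper uses a \emph{single} test pair $(\MM v,\MM w)=(\MM u,\,\MM\eta+k\nabla\cdot\MM u)$, valid because $\nabla\cdot V_h\subset W_h$. The $\mca$-weighted divergence terms then cancel symmetrically, leaving
\begin{equation*}
\hat a\bigl((\MM u,\MM\eta),(\MM v,\MM w)\bigr)=\|\MM u\|^2_{\mcm}+k(\mcb\MM u,\MM u)+Fr^2\|\MM\eta\|^2_{\mca}+Fr^2 k(\MM\eta,\nabla\cdot\MM u)_{\mca}+Fr^2 k^2\|\nabla\cdot\MM u\|^2_{\mca},
\end{equation*}
and a single Young inequality absorbs the cross term to give $\hat a\geq\tfrac12\|(\MM u,\MM\eta)\|^2_{\hat b}$, while $\|(\MM v,\MM w)\|^2_{\hat b}\leq 3\|(\MM u,\MM\eta)\|^2_{\hat b}$ gives $\tfrac{1}{2\sqrt3}$ exactly. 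In contrast, your proposal introduces a right inverse of the divergence with constant $\beta$ and free weights $\delta_1,\delta_2$; the constant you would obtain from that construction would depend on $\beta$ and on how you balance the Young inequalities, and there is no reason it would simplify to $\tfrac{1}{2\sqrt3}$. Asserting that it does without carrying out the optimization is a gap. The essential idea you are missing is to augment $\MM w$ by $k\nabla\cdot\MM u$ so that the $\|\nabla\cdot\MM u\|^2_{\mca}$ term is produced directly by the bilinear form itself, rather than manufactured by auxiliary test functions.
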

\begin{proof}
  Let $(\MM u, \MM \eta) \in \MM V_h^N \times \MM W_h^N$ be given and put
  $\MM v = \MM u$ and $\MM w = \MM \eta + k \nabla \cdot \MM u$.  Then we see that
  \begin{equation}
  \begin{split}
      \hat{a}\left((\MM u , \MM \eta), (\MM v,  \MM w)\right)
      = & \left( \MM u , \MM u \right)_\mcm + \tfrac{k}{\epsilon} \left( \MM u^\perp, \MM u \right)_\mcm - Fr^2 k \left( \MM \eta , \nabla \cdot \MM u \right)_{\mca}
      + k \left( \mcb \MM u , \MM u \right)  \\
      & + Fr^2\left( \MM \eta , \MM \eta + k \nabla \cdot \MM u \right)_{\mca}
      + Fr^2 k \left( \nabla \cdot \MM u , \MM \eta + k \nabla \cdot \MM u \right)_{\mca} \\
      = & \| \MM u \|_\mcm^2 + k \left( \mcb \MM u , \MM u \right)  \\
      & + Fr^2 \| \MM \eta \|_{\mca}^2 + Fr^2 k \left( \MM \eta , \nabla \cdot \MM u \right)_{\mca}
      + Fr^2 k^2 \| \nabla \cdot \MM u \|^2_{\mca}.
  \end{split}
  \end{equation}
  Now, the semi-definiteness of $\mcb$ and standard estimates let us make the bound
  \begin{equation}
  \begin{split}
      \hat{a}\left((\MM u , \MM \eta), (\MM v,  \MM w)\right)
       \geq & \| \MM u \|^2_{\mcm} 
      + Fr^2 \| \MM \eta \|^2_{\mca} + Fr^2 k^2 \| \nabla \cdot \MM u \|^2_{\mca}
      \\
      & - \tfrac{Fr^2}{2} \| \MM \eta \|^2_{\mca} - \tfrac{Fr^2 k^2}{2} \| \nabla \cdot \MM u \|^2_{\mca} \\
    = & \| \MM u \|^2_{\mcm} 
      + \tfrac{Fr^2}{2} \| \MM \eta \|^2_{\mca} + \tfrac{Fr^2 k^2}{2} \| \nabla \cdot \MM u \|^2_{\mca} \\
      \geq & \tfrac{1}{2} \left\| \left( \MM u , \MM \eta \right) \right\|_{\hat{b}}^2.
  \end{split}
  \end{equation}
  Now, we also have
  \begin{equation}
      \begin{split}
        \left\| \left( \MM v , \MM w \right) \right\|_{\hat{b}}^2
      & = \| \MM u \|^2_{\mcm} + Fr^2 k^2 \| \nabla \cdot \MM u \|^2_{\mca}
      + Fr^2 \| \MM \eta + k \nabla \cdot \MM u \|^2_{\mca} \\
      & \leq \| \MM u \|^2_{\mcm} + Fr^2 k^2 \| \nabla \cdot \MM u \|^2_{\mca}
      + 2 Fr^2 \left( \| \MM \eta \|_{\mca}^2 + k^2 \| \nabla \cdot \MM u \|^2_{\mca} \right)\\
      & \leq 3 \left\| \left( \MM u , \MM \eta \right) \right\|_{\hat{b}}^2.
      \end{split}
  \end{equation}
  Hence,
  \begin{equation}
      \hat{a}\left(\left( \MM u , \MM \eta \right), \left( \MM v , \MM w \right) \right)
      \geq \tfrac{1}{2} \left\| \left( \MM u , \MM \eta \right) \right\|
      \left\| \left( \MM v , \MM w \right) \right\|
      \geq \frac{1}{2\sqrt{3}} \left\| \left( \MM u , \MM \eta \right) \right\|_{\hat{b}}
      \left\| \left( \MM v , \MM w \right) \right\|_{\hat{b}},
  \end{equation}
  and the result follows.
\end{proof}

\section{More about $\mca$}
The major cost of applying our block diagonal preconditioner is the inversion of the upper-left block of~\eqref{eq:precond}:
\begin{equation}
\label{eq:C}
    \mathrm{C} = \mathrm{M}^V + Fr^2 k^2 \mathrm{E}^{\mca}.
\end{equation}
One could adapt the $\hdiv$ multigrid in~\cite{arnold2000multigrid} to this problem, but the requisite patch problems would include degrees of freedom of all the layers.  Solving the local patch problems would then become increasingly expensive as the number of layers increases.
We do not analyze this method further, but work toward approaches that avoid this limitation.
In passing, we also note a passing structural similarity of~\eqref{eq:C} to the matrices obtained for higher-order Runge-Kutta discretizations, so that it might be possible to adapt preconditioning techniques from references such as~\cite{farrell2020irksome, rana2021new, southworth1}.

In this section, we give an explicit formula for the inverse of $\mca$ and estimates on its extremal eigenvalues.
This sets us up to discuss preconditioners for $\mathrm{C}$ in the following section.

\subsection{An explicit inverse for \texorpdfstring{$\mca$}{A}}

\begin{proposition}
  Define the matrix $\mcc$ to be the $N \times N$ symmetric tridiagonal matrix with
  \begin{equation}
  \label{eq:cdiag}
    \mcc_{ii} = \begin{cases}
      \frac{1}{\rho_1} + \frac{1}{\rho_2 - \rho_1}, & i = 1, \\
      \frac{1}{\rho_i - \rho_{i-1}} + \frac{1}{\rho_{i+1} - \rho_i}, & 2 < i < N-1, \\
      \frac{1}{\rho_{n} - \rho_{n-1}}, & i = N,
    \end{cases}
  \end{equation}
  and off-diagonal entries
  \begin{equation}
  \label{eq:coffdiag}
    \mcc_{i,i+1} = \mcc_{i+1, i}  = -\frac{1}{\rho_{i+1} - \rho_i}, \ \ \ 1 \leq i \leq N-1.
  \end{equation}
  Then $\mcc$ is the inverse of $\mca$ given in~\eqref{eq:Adef}.
\end{proposition}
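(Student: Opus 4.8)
The plan is to exhibit a symmetric factorization of $\mca$ that makes its inverse transparent.  Let $L$ be the $N \times N$ lower-triangular matrix all of whose entries on and below the diagonal equal $1$, and let $\Lambda = \diag(\rho_1, \rho_2 - \rho_1, \dots, \rho_N - \rho_{N-1})$.  Adopting the convention $\rho_0 = 0$, the identity
\begin{equation}
(L \Lambda L^T)_{ij} = \sum_{k=1}^{\min\{i,j\}} (\rho_k - \rho_{k-1}) = \rho_{\min\{i,j\}}
\end{equation}
follows by telescoping, so $\mca = L \Lambda L^T$.  Since the densities strictly increase, each diagonal entry of $\Lambda$ is positive and $\Lambda$ is invertible; and $L$ is unit lower-triangular, hence invertible, with $L^{-1}$ the unit lower-bidiagonal matrix given by $(L^{-1})_{ii} = 1$, $(L^{-1})_{i,i-1} = -1$ for $2 \le i \le N$, and all other entries zero.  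Consequently $\mca^{-1} = L^{-T} \Lambda^{-1} L^{-1}$.

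First I would note that, since $L^{-1}$ has lower bandwidth one and $\Lambda^{-1}$ is diagonal, the product $L^{-T} \Lambda^{-1} L^{-1}$ is automatically tridiagonal and symmetric; it remains only to read off its entries.  Writing
\begin{equation}
(\mca^{-1})_{ij} = \sum_{k=1}^N (L^{-1})_{ki}\, (\Lambda^{-1})_{kk}\, (L^{-1})_{kj}
\end{equation}
and using that column $i$ of $L^{-1}$ is supported on rows $k \in \{i, i+1\}$, with values $1$ and $-1$ respectively (the latter absent when $i = N$), the sum collapses to at most two terms.  Taking $j = i$ gives $\tfrac{1}{\rho_i - \rho_{i-1}} + \tfrac{1}{\rho_{i+1} - \rho_i}$ in the interior, with the first summand equal to $\tfrac{1}{\rho_1}$ when $i = 1$ and the second summand absent when $i = N$; taking $j = i+1$ leaves only the index $k = i+1$ and yields $-\tfrac{1}{\rho_{i+1} - \rho_i}$; and for $|i - j| \ge 2$ the two supports are disjoint and the entry vanishes.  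These match \eqref{eq:cdiag}--\eqref{eq:coffdiag}, so $\mcc = \mca^{-1}$.

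I expect the only delicate point to be the bookkeeping at the two ends of the chain, namely confirming that the absent neighbours $\rho_0$ and $\rho_{N+1}$ drop out correctly: the former is handled by the convention $\rho_0 = 0$, which makes $\Lambda_{11} = \rho_1$, and the latter by the fact that $L^{-1}$ has no row $N+1$ to contribute to the last diagonal entry.  A more computational alternative would skip the factorization and verify $\mca \mcc = I$ directly, again telescoping partial sums of the $\rho_k$ and treating the first and last columns separately; but routing through $L \Lambda L^T$ makes both the tridiagonal sparsity pattern and the explicit formulas for the entries appear with essentially no calculation.
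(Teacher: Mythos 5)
Your proof is correct, but it proceeds by a genuinely different route than the paper's. The paper verifies $\mcc\mca = I$ directly, working through the rows of the product in several cases (first row, interior rows with $j<i$, $j=i$, $j>i$, and last row); the algebra in each case telescopes to $0$ or $1$, but no structural explanation of where $\mcc$ comes from is offered. You instead exhibit the explicit factorization $\mca = L\Lambda L^T$ with $L$ the lower-triangular all-ones matrix and $\Lambda = \diag(\rho_1, \rho_2-\rho_1,\dots,\rho_N-\rho_{N-1})$, from which $\mca^{-1} = L^{-T}\Lambda^{-1}L^{-1}$ is immediately seen to be symmetric tridiagonal because $L^{-1}$ is unit lower bidiagonal. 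Your approach has two advantages: it explains \emph{why} the inverse is tridiagonal rather than merely confirming a guessed formula, and it delivers a bidiagonal-diagonal-bidiagonal factorization of $\mcc$ directly (your $L^{-T}\Lambda^{-1}L^{-1}$ is a $U\mcd U^T$ variant of the $\mcl\mcd\mcl^T$ factorization the paper invokes separately at \eqref{eq:ldlt}), with explicit entries, whereas the paper only asserts such a factorization exists. The bookkeeping at the endpoints is handled cleanly by your $\rho_0=0$ convention and the absence of row $N+1$; both match the stated entries of $\mcc$, modulo the evident typos in \eqref{eq:cdiag} ($\rho_n$ for $\rho_N$, and the middle case should read $1 < i < N$ rather than $2 < i < N-1$), which your derivation in fact corrects.
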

\begin{proof}
  The result can be obtained by Gauss-Jordan elimination on $\mca$, although the notation for the case of general $N$ is quite cumbersome.  Here, we confirm the result is correct by verifying $\mcc\mca=I$.

Since the diagonal of $\mcc$ is defined piecewise, we proceed in a few cases.  Consider the first row of $\mcs=\mcc\mca$:
\begin{equation}
\begin{split}
  \mcs_{11} & = \mcc_{11} \mca_{11} + \mcc_{12} \mca_{21} \\
  & = \left( \frac{1}{\rho_1} + \frac{1}{\rho_2 - \rho_1} \right) \rho_1
  - \frac{1}{\rho_2 - \rho_1} \rho_1 = 1. 
\end{split}
\end{equation}
For any $j > 1$, we have that $\mca_{1j} = \rho_1$ and $\mca_{2j} = \rho_2$, so
\begin{equation}
\begin{split}
  \mcs_{1j} & = \mcc_{11} \mca_{1j} + \mcc_{12} \mca_{2j} \\
  & = \left( \frac{1}{\rho_1} + \frac{1}{\rho_2 - \rho_1} \right) \rho_1
  - \frac{1}{\rho_2 - \rho_1} \rho_2 \\
  & = 1 - \frac{\rho_2 - \rho_1}{\rho_2 - \rho_1} = 0.
\end{split}
\end{equation}
Now, for $2 \leq i < N$,  we have
\begin{equation}
  \begin{split}
    \mcs_{ii} & = \mcc_{i, i-1} \mca_{i-1, i} + \mcc_{i, i} \mca_{i, i} + \mcc_{i, i+1} \mca_{i+1, i} \\
    & = -\frac{1}{\rho_{i} - \rho_{i-1}} \rho_{i-1}
    + \left(\frac{1}{\rho_{i} - \rho_{i-1}} + \frac{1}{\rho_{i+1} - \rho_{i}} \right)
    \rho_i
    -\frac{1}{\rho_{i+1} - \rho_{i}} \rho_i = 1,
  \end{split}
\end{equation}
For some $j > i$, we have
\begin{equation}
  \begin{split}
    \mcs_{ij} & = \mcc_{i, i-1} \mca_{i-1, j} + \mcc_{i, i} \mca_{i, j} + \mcc_{i, i+1} \mca_{i+1, j} \\
    & = -\frac{1}{\rho_{i} - \rho_{i-1}} \rho_{i-1}
    + \left(\frac{1}{\rho_{i} - \rho_{i-1}} + \frac{1}{\rho_{i+1} - \rho_{i}} \right)
    \rho_i
    -\frac{1}{\rho_{i+1} - \rho_{i}} \rho_{i+1} = 0,
  \end{split}
\end{equation}
and for $j < i$,
\begin{equation}
  \begin{split}
    \mcs_{ij} & = \mcc_{i, i-1} \mca_{i-1, j} + \mcc_{i, i} \mca_{i, j} + \mcc_{i, i+1} \mca_{i+1, j} \\
    & = -\frac{1}{\rho_{i} - \rho_{i-1}} \rho_{j}
    + \left(\frac{1}{\rho_{i} - \rho_{i-1}} + \frac{1}{\rho_{i+1} - \rho_{i}} \right)
    \rho_j
    -\frac{1}{\rho_{i+1} - \rho_{i}} \rho_{j} = 0.
  \end{split}
\end{equation}
Finally, we handle the last row.  The diagonal entry there is
\begin{equation}
  \begin{split}
    \mcs_{N,N} & = \mcc_{N, N-1} \mca_{N-1, N} + \mcc_{N, N} \mca_{N, N} \\
    & = -\frac{1}{\rho_N - \rho_{N-1}} \rho_{N-1} + \frac{1}{\rho_N - \rho_{N-1}} \rho_{N} = 1,
  \end{split}
\end{equation}
and for any $1 \leq j < N$,
\begin{equation}
  \begin{split}
    \mcs_{N,N} & = \mcc_{N, N-1} \mca_{N-1, j} + \mcc_{N, N} \mca_{N, j} \\
    & = -\frac{1}{\rho_N - \rho_{N-1}} \rho_{j} + \frac{1}{\rho_N - \rho_{N-1}} \rho_{j} = 0.
  \end{split}
\end{equation}
\end{proof}

Finally, we note that since $\mcc$ is tridiagonal and symmetric positive-definite, it has a factorization 
\begin{equation}
\label{eq:ldlt}
    \mcc = \mcl \mcd \mcl^T
\end{equation} 
with bidiagonal $\mcl$ and diagonal $\mcd$ with positive entries.

\subsection{The spectrum of \texorpdfstring{$\mca$}{A}}
Subsequent analysis will rely on knowing things about the spectrum of $\mca$, and we are able to give certain instructive spectral bounds here.  Since $\mca$ is symmetric and positive-definite, we let $\lambda_1 \geq \lambda_2 \geq \dots \geq \lambda_N > 0$ be its eigenvalues, arranged in nonincreasing order.

\begin{proposition}
The largest eigenvalue of $\mca$ satisfies
\begin{equation}
\label{eq:lam1bound}
    N \rho_1 \leq \lambda_1 \leq \sum_{j=1}^N \rho_j
\end{equation}
\end{proposition}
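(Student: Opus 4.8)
The matrix $\mca$ has entries $\mca_{ij} = \rho_{\min\{i,j\}}$, so it is the "min-type" matrix built from the increasing sequence $\rho_1 < \rho_2 < \dots < \rho_N$. I would obtain both bounds from the Rayleigh quotient $\lambda_1 = \max_{\MM x \neq 0} \frac{\MM x^T \mca \MM x}{\MM x^T \MM x}$, after first writing $\MM x^T \mca \MM x$ in a convenient closed form.

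\textbf{Lower bound.} For the lower bound $N\rho_1 \le \lambda_1$, the simplest route is to exhibit a test vector. Taking $\MM x = \MM e^1 = (1,0,\dots,0)^T$ is too weak; instead I would use the observation that $\mca = \rho_1 J + R$, where $J$ is the all-ones matrix and $R$ is positive semidefinite (indeed $R_{ij} = \rho_{\min\{i,j\}} - \rho_1 \ge 0$ is itself a min-type matrix for the shifted nonnegative sequence $\rho_i - \rho_1$, hence PSD — this can be cited from the explicit inverse, or proved directly via the decomposition below). Then for $\MM x = (1,1,\dots,1)^T$ we get $\MM x^T \mca \MM x = \rho_1 N^2 + \MM x^T R \MM x \ge \rho_1 N^2$, while $\MM x^T \MM x = N$, giving $\lambda_1 \ge N\rho_1$. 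Alternatively, and perhaps cleaner, note $\sum_{i,j} \mca_{ij} \ge \sum_{i,j} \rho_1 = N^2 \rho_1$ directly since every entry is at least $\rho_1$; this is just the $\MM x = \mathbf{1}$ Rayleigh quotient again.

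\textbf{Upper bound.} For $\lambda_1 \le \sum_{j=1}^N \rho_j$, I would use a crude but sufficient norm bound. Since $\mca$ is symmetric PSD, $\lambda_1 = \|\mca\|_2 \le \|\mca\|_1 = \max_j \sum_i |\mca_{ij}| = \max_j \sum_i \rho_{\min\{i,j\}}$. For column $j$, $\sum_{i=1}^N \rho_{\min\{i,j\}} = \sum_{i=1}^{j} \rho_i \cdot[\text{wait: for } i \le j, \min = i; \text{ for } i > j, \min = j] = \sum_{i=1}^{j}\rho_i + (N-j)\rho_j$. This is increasing in $j$ (each step adds $\rho_{j+1} + (N-j-1)\rho_{j+1} - (N-j)\rho_j = (N-j)(\rho_{j+1}-\rho_j) \ge 0$), so the max is at $j = N$, giving $\sum_{i=1}^N \rho_i$. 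Hence $\lambda_1 \le \sum_{j=1}^N \rho_j$ exactly as claimed. (One could also just say: $\lambda_1 \le \operatorname{tr}(\mca) = \sum_i \rho_i$ since all eigenvalues are positive — this is even shorter and avoids the column-sum computation entirely.)

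\textbf{Main obstacle.} There is no real obstacle here; the statement is elementary once one recognizes the min-matrix structure. The only thing to be a little careful about is choosing the \emph{cheapest} correct argument: the trace bound disposes of the upper inequality in one line, and the $\MM x = \mathbf{1}$ test vector disposes of the lower one. I would present it that way rather than invoking the $\mcc = \mcl\mcd\mcl^T$ factorization. If the authors instead want a self-contained PSD-ness argument for the "$R$ is PSD" step, that follows from the Cholesky-type factorization $\mca = L\,\operatorname{diag}(\rho_1, \rho_2-\rho_1,\dots,\rho_N-\rho_{N-1})\,L^T$ with $L$ lower-triangular all-ones, which also re-derives the inverse $\mcc$ and confirms positive-definiteness of $\mca$ used implicitly in ordering the eigenvalues.
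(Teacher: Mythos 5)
Your proposal is correct, and for the lower bound it follows the paper's own route almost exactly: test the Rayleigh quotient at $\MM x = \mathbf{1}$, so that $\MM x^T \mca \MM x$ is the grand entry sum, which is at least $N^2\rho_1$ because every entry of $\mca$ is at least $\rho_1$ (the paper computes the row sums explicitly, $\left(\mca\mathbf 1\right)_i = \sum_{j<i}\rho_j + (N-i+1)\rho_i$, and then sums; your ``every entry $\geq \rho_1$'' shortcut is a cleaner justification of the same inequality). For the upper bound the paper uses Gerschgorin on the final row, whose disk reaches to $\rho_N + \sum_{j<N}\rho_j = \sum_j\rho_j$; your column-sum argument $\lambda_1 \le \|\mca\|_1$ is the same computation in disguise, since for a symmetric matrix with nonnegative entries the maximal Gerschgorin radius is exactly the maximal row/column sum. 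Your trace observation, $\lambda_1 \le \operatorname{tr}(\mca) = \sum_i \rho_i$ because $\mca$ is positive definite, is the one genuinely different ingredient: it reaches the identical bound in one line, does not require identifying which row maximizes the sum, and only uses positive definiteness of $\mca$ (which the paper already needs elsewhere). That is a tidy simplification; the Gerschgorin/column-sum argument has the minor advantage of not presupposing definiteness, but since the paper establishes definiteness independently (via the $\mcl\mcd\mcl^T$ factorization of $\mcc$), the trace route is the cheapest correct proof of this inequality.
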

\begin{proof}
We handle the upper bound by Gerschgorin's Circle Theorem.  Owing to the structure of $\mca$, the largest outer extent of a Gerschgorin disk comes from the final row, and the maximal value is
\begin{equation}
\rho_N + \sum_{j=1}^{N-1} \rho_j = \sum_{i=1}^N \rho_i.
\end{equation}

Now, we derive the lower bound in~\eqref{eq:lam1bound}, which confirms that $\lambda_1$ is in fact comparable to $N$.  Since $\lambda_1$ maximizes the Rayleigh quotient:
\begin{equation}
    \lambda_1 = \max_{\bfx \neq 0} \frac{\bfx^T A \bfx}{\bfx^T \bfx},
\end{equation}
using any particular choice of nonzer $\bfx$ in the Rayleigh quotient gives a lower bound for $\lambda_1$.
We chose the vector $\bfx$ consisting entire of ones.
Since $\bfx^T \bfx = N$, we know that
\begin{equation}
  N \lambda_1 \geq \bfx^T \mca \bfx.
\end{equation}
Proceeding, the entries of $\mca \bfx$ are just the row sums of $\mca$:
\begin{equation}
\left( \mca \bfx \right)_i = \sum_{j=1}^{i-1} \rho_j + \sum_{j=i}^N \rho_i
= \left( \sum_{j=1}^{i-1} \rho_j \right) + \left(N-i+1\right) \rho_i.
\end{equation}
Evaluating $\bfx^T \mca \bfx$ gives
\begin{equation}
\begin{split}
    \bfx^T \mca \bfx & = \sum_{i=1}^N \left( \mca \bfx \right)_i
    = \sum_{i=1}^N \left[ \left( \sum_{j=1}^{i-1} \rho_j \right) + \left(N-i+1\right) \rho_i \right] \\
    & = \sum_{i=1}^N \left(N - i\right) \rho_i + \sum_{i=1}^N \left(N-i+1\right)\rho_i
    = \sum_{i=1}^N \left( 2N - 2i + 1 \right) \rho_i.
    \end{split} 
\end{equation}
Since $\rho_1 < \rho_i$ for $i > 1$, 
\begin{equation}
    \begin{split}
        N \lambda_1 & \geq \rho_1 \sum_{i=1}^N \left( 2 N - 2i + 1 \right)
        = \rho_1 \left[ 2 N^2 - 2 \frac{N(N+1)}{2} + N \right]
         = N^2 \rho_1.
    \end{split} 
\end{equation}
This proves the lower bound.
\end{proof}
Similar techniques can lead to upper and lower bounds on the minimal eigenvalue $\lambda_N$:
\begin{theorem}
Let $\delta \rho_* = \min_{1\leq i \leq N-1} \rho_{i+1}-\rho_i$ and $\delta \rho^* = \max_{1\leq i \leq N_1} \rho_{i+1}-\rho_i$.  Then
\begin{equation}
\label{eq:lamnbound}
\tfrac{\delta \rho_*}{4} \leq \lambda_N \leq \tfrac{3 \delta \rho^*}{10}.
\end{equation}
\end{theorem}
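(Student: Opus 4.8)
The plan is to pass to the explicit tridiagonal inverse $\mcc$ supplied by the preceding proposition. Since $\mca$ is symmetric positive definite, $\lambda_N = \lambda_{\min}(\mca) = 1/\lambda_{\max}(\mcc)$, so \eqref{eq:lamnbound} is equivalent to the sandwich $10/(3\,\delta\rho^*) \le \lambda_{\max}(\mcc) \le 4/\delta\rho_*$. Throughout I write $d_i = \rho_{i+1} - \rho_i$ for the inter-layer gaps, so $\delta\rho_* \le d_i \le \delta\rho^*$, and I would first record the auxiliary inequality $\delta\rho_* \le \rho_1$, which follows from $(N-1)\,\delta\rho_* \le \sum_{i=1}^{N-1} d_i = \rho_N - \rho_1 \le \rho_1$ using assumption~\eqref{eq:rhonnotbig}.

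The lower bound $\lambda_N \ge \delta\rho_*/4$ I would get by applying Gershgorin's circle theorem to $\mcc$, in the same spirit as the bound on $\lambda_1$ proved above. By~\eqref{eq:cdiag}--\eqref{eq:coffdiag}, each diagonal entry of $\mcc$ is a sum of at most two reciprocals drawn from $\{1/\rho_1, 1/d_1, \dots, 1/d_{N-1}\}$, each of which is at most $1/\delta\rho_*$ by the auxiliary inequality, so the diagonal entries are bounded by $2/\delta\rho_*$; and the off-diagonal entries in any row sum in absolute value to at most $2/\delta\rho_*$ as well (for an interior row they in fact equal the diagonal entry). Hence every Gershgorin disc of $\mcc$ lies in $[0, 4/\delta\rho_*]$, so $\lambda_{\max}(\mcc) \le 4/\delta\rho_*$, which is the claimed lower bound on $\lambda_N$.

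For the upper bound $\lambda_N \le 3\,\delta\rho^*/10$ I would bound the Rayleigh quotient of $\mca$ directly, using $\lambda_N = \min_{\bfx \ne 0} \bfx^T\mca\bfx / \|\bfx\|^2$. The structural fact guiding the choice of test vector is that, writing $S_m = \sum_{j > m} x_j$ for the tail sums, one has $\bfx^T\mca\bfx = \rho_1 S_0^2 + \sum_{m=1}^{N-1} d_m S_m^2$; this is immediate from~\eqref{eq:Adef} via $\rho_{\min\{i,j\}} = \rho_1 + \sum_{m=1}^{\min\{i,j\}-1} d_m$, equivalently from $\mca = \rho_1\,\mathbf{1}\mathbf{1}^T + \sum_{m} d_m\,\mathbf{1}_{>m}\mathbf{1}_{>m}^T$ with $\mathbf{1}_{>m}$ the indicator of positions $m+1, \dots, N$. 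So the efficient test vectors are localized alternating-binomial stencils: taking $x_p = (-1)^{p-1}\binom{k}{p-1}$ for $1 \le p \le k+1$ and $x_p = 0$ otherwise gives $S_0 = 0$, $S_m = (-1)^m\binom{k-1}{m-1}$ for $1 \le m \le k$, and $S_m = 0$ for $m > k$, so that $\bfx^T\mca\bfx = \sum_{m=1}^{k}\binom{k-1}{m-1}^2 d_m \le \binom{2k-2}{k-1}\,\delta\rho^*$ while $\|\bfx\|^2 = \binom{2k}{k}$. This produces $\lambda_N \le \tfrac{k}{2(2k-1)}\,\delta\rho^*$ for every $1 \le k \le N-1$; the choice $k = 3$ (stencil $(1,-3,3,-1)$, for which $\bfx^T\mca\bfx = d_1 + 4 d_2 + d_3$ and $\|\bfx\|^2 = 20$) gives precisely the constant $3/10$.

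I do not anticipate a deep obstacle: the real content is spotting the right stencil, evaluating the finite-difference partial sums, and invoking the Vandermonde identities $\sum_j \binom{k-1}{j}^2 = \binom{2k-2}{k-1}$ and $\sum_j \binom{k}{j}^2 = \binom{2k}{k}$. The one point that genuinely needs care is that the four-entry stencil presumes $N \ge 4$; for $N \in \{2,3\}$ only $k \le N-1$ is available, yielding the weaker constants $1/2$ and $1/3$, so the argument as sketched gives~\eqref{eq:lamnbound} for $N \ge 4$ and those two cases would be handled separately. Since $\tfrac{k}{2(2k-1)}$ decreases to $\tfrac14$ as $k$ grows, nothing is lost for larger $N$ by stopping at $k = 3$, and the upper and lower constants are then asymptotically matched.
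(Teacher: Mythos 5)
Your lower bound argument is essentially the paper's: Gerschgorin applied to the tridiagonal $\mcc = \mca^{-1}$, with the assumption $\rho_N \le 2\rho_1$ used to absorb the exceptional $1/\rho_1$ term in the first row, yielding $\lambda_{\max}(\mcc) \le 4/\delta\rho_*$. This part is correct and matches the paper.

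Your upper bound takes a genuinely different and arguably cleaner route. The paper bounds $\lambda_{\max}(\mcc)$ from below via a Rayleigh quotient with the test vector $\bfx = -\bfe_{i-1} + \bfe_i - \bfe_{i+1}$, whose support must sit on \emph{interior} rows of $\mcc$ (the paper takes $3\le i\le N-2$, hence implicitly $N\ge5$), and then evaluates $\bfx^T\mcc\bfx$ entrywise from~\eqref{eq:cdiag}--\eqref{eq:coffdiag}. You instead bound $\lambda_N$ directly from the Rayleigh quotient of $\mca$ itself, using the decomposition $\mca = \rho_1\,\mathbf{1}\mathbf{1}^T + \sum_m d_m\,\mathbf{1}_{>m}\mathbf{1}_{>m}^T$ --- a clean structural fact about~\eqref{eq:Adef} that the paper never writes down --- so that $\bfx^T\mca\bfx = \rho_1 S_0^2 + \sum_m d_m S_m^2$ with $S_m$ the tail sums. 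Your alternating-binomial stencil $(1,-3,3,-1)$ gives $\bfx^T\mca\bfx = d_1 + 4d_2 + d_3 \le 6\,\delta\rho^*$ and $\|\bfx\|^2 = 20$, hence the constant $3/10$; the Vandermonde computations are correct, and as a bonus you obtain the whole family $\lambda_N \le \tfrac{k}{2(2k-1)}\,\delta\rho^*$ for any $k\le N-1$. Because your test vector sits at positions $1$ through $4$ and you never touch the boundary rows of $\mcc$, the argument works from $N\ge 4$ with no special cases, whereas the paper's as written needs $N\ge5$.

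The one gap is exactly the one you flag: the argument does not give the constant $3/10$ for $N\in\{2,3\}$. The paper's own proof shares this restriction, and in fact the theorem as stated is simply \emph{false} for $N=2$: with $\rho=(1,1.5)$ one has $\mcc=\mca^{-1}=\begin{pmatrix}3 & -2 \\ -2 & 2\end{pmatrix}$, so $\lambda_{\max}(\mcc)=(5+\sqrt{17})/2\approx 4.56$ and $\lambda_2\approx 0.219 > 0.15 = 3\delta\rho^*/10$. So it is not enough to say ``handle $N=2,3$ separately'' --- the constant $3/10$ genuinely requires at least four layers, and the hypothesis $N\ge4$ should appear in the theorem statement. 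This is a defect in the paper rather than in your proof, but you should state it as such rather than imply the small cases merely need a separate argument.
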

\begin{proof}
We apply the Gerschgorin Circle Theorem to bound the maximal eigenvalue of $\mcc$, which  is the reciprocal of the minimal eigenvalue of $\mca$, to give the claimed lower bound.
Consider the first row of $\mcc$.  The diagonal plus sum of magnitudes of off-diagonal entries yields
\begin{equation}
\tfrac{1}{\rho_1} + \tfrac{1}{\rho_2-\rho_1} = 
\tfrac{\rho_1+\rho_2}{\rho_1(\rho_2-\rho_1)}.
\end{equation}
We then use~\eqref{eq:rhonnotbig} to bound this by 
\[
\tfrac{3}{\rho_2-\rho_1} \leq \tfrac{3}{\delta \rho_*}.
\]
Then, for $2 \leq i \leq N-1$, the diagonal plus sum of off-diagonal magnitudes gives
\begin{equation}
2 \left[ \tfrac{1}{\rho_i - \rho_i-1} + \tfrac{1}{\rho_{i+1}-\rho_i} \right]
\leq \tfrac{4}{\delta \rho_*}.
\end{equation}
Finally, outer limit of the Gerschgorin disk for the final row is
\begin{equation}
    \tfrac{2}{\rho_N - \rho_{N-1}} \leq \tfrac{2}{\delta \rho_*}.
\end{equation}
Taking the maximum over these three calculations gives that 
\begin{equation}
    \tfrac{1}{\lambda_N} \leq \tfrac{4}{\delta \rho_*},
\end{equation}
and the reciprocal of this inequality gives the lower bound.

To establish the upper bound, we again consider the Rayleigh quotient on a particular vector.  Pick some vector $\mathbf{x}$ such that for a fixed $3 \leq i \leq N-2$
\begin{equation}
    \bfx_j = \begin{cases}
    1, & j = i, \\
    -1, & |j-i| = 1, \\
    0, & \mathrm{otherwise}.
    \end{cases}
\end{equation}
Selecting  $i =1,2,N-2,N-1$, although this requires dealing with exceptional first and last rows of~\eqref{eq:cdiag} and does not appreciably improve our bound.  Since $\bfx$ is nonzero only in entries $i-1, i, i+1$, we directly computing the relevant entries of $\mathcal{C}\bfx$ using~\eqref{eq:cdiag} and~\eqref{eq:coffdiag}.
\begin{equation}
\begin{split}
    \left( \mcc \bfx \right)_{i-1}
    & = \sum_{j=1}^N \mcc_{i-1,j} \bfx_j
    = \mcc_{i-1, i-1} \bfx_{i-1} + \mcc_{i-1, i} \bfx_i \\
    & = -\left( \tfrac{1}{\rho_{i-1}-\rho_{i-2}} + \tfrac{1}{\rho_{i}-\rho_{i-1}} \right)
    - \tfrac{1}{\rho_i - \rho_{i-1}} \\
    & = -\tfrac{1}{\rho_{i-1} - \rho_{i-2}} - \tfrac{2}{\rho_i - \rho_{i-1}}.
\end{split}
\end{equation}
\begin{equation}
    \begin{split}
        \left(\mcc \bfx\right)_i & = \sum_{j=1}^N \mcc_{i,j} \bfx_j
    = \mcc_{i, i-1} \bfx_{i-1} + \mcc_{i, i} \bfx_i + \mcc_{i, i+1} \bfx_{i+1} \\
    & = \tfrac{1}{\rho_{i}-\rho_{i-1}} + \left( \tfrac{1}{\rho_i-\rho_{i-1}} + \tfrac{1}{\rho_{i+1}-\rho_i} \right) + \tfrac{1}{\rho_{i+1}-\rho_i} \\
    & = \tfrac{2}{\rho_{i}-\rho_{i-1}} + \tfrac{2}{\rho_{i+1}-\rho_i}.
    \end{split}
\end{equation}
Similarly, we can compute
\begin{equation}
    \left( \mcc \bfx \right)_{i+1} = -\tfrac{2}{\rho_{i+1}-\rho_i} - \tfrac{1}{\rho_{i+2}-\rho_{i+1}}.
\end{equation}
Now, we use the results to directly calculate that
\begin{equation}
    \bfx^T \mcc \bfx = \tfrac{1}{\rho_{i-1}-\rho_{i-2}} + \tfrac{4}{\rho_{i}-\rho_{i-1}}
    + \tfrac{4}{\rho_{i+1}-\rho_{i}} + \tfrac{1}{\rho_{i+2}-\rho_{i+1}}
    \geq \tfrac{10}{\delta \rho^*}.
\end{equation}
Now, we note that $\bfx^T \bfx = 3$ for this choice of $\bfx$ and using the Rayleigh quotient gives the upper bound on $\lambda_N$.
\end{proof}
Assuming some kind of comparability between $\delta \rho_*$ and $\delta \rho_*$, both are on the order of $N$.  This gives a spectral condition number (ratio of extremal eigenvalues) for $\mca$ on the order of $N^{2}$.

\section{Simplifying the preconditioner}
Our weighted norm preconditioner~\eqref{eq:precond} provides parameter-robustness, but also maintains an all-to-all coupling between the layers that can become expensive as the number of layers increases.
In this section, we propose two approaches to overcoming this difficulty.
In the first case, we simply ignore the inter-layer coupling.
We are able to prove that this strategy is more effective than the the $N^2$ conditioning of $A$ might otherwise suggest.
In the second case, we make use of the special properties of $\mca$ derived above to propose a change of variables in the upper-left block of~\eqref{eq:precond} that renders coupling only between adjacent layers.

\subsection{Neglecting inter-layer coupling}
The bilinear form
\begin{equation}
\label{eq:blc}
    c(\MM u, \MM v) = \left( \MM u , \MM v \right)_\mcm + Fr^2 k^2 \left( \nabla \cdot \MM u , \nabla \cdot \MM v \right)_{\mca},
\end{equation}
yields the matrix~\eqref{eq:C} under discretization, and we want to
compare $c$ to the simpler form obtained by replacing the $\mca$-weighted inner product with the standard one:
\begin{equation}
  \label{eq:noa}
\hat{c}(\MM u, \MM v) = \left( \MM u , \MM v \right)_\mcm + Fr^2 k^2 \left( \nabla \cdot \MM u , \nabla \cdot \MM v \right).
\end{equation}
The latter form gives rise to the block diagonal matrix
\begin{equation}
\hat{\mathrm{C}} = \mathrm{M}^V + Fr^2 k^2  \mathrm{E},
\end{equation}
which we can consider using it as a preconditioner for the matrix derived from $c(\cdot, \cdot)$.
Both $c$ and $\hat{c}$ are symmetric and positive-definite, and showing an equivalence between them controls eigenvalues of the system obtained by preconditioning one with the other.

As a first attempt,
$\mca$ is symmetric and positive-definite, and we can use the Rayleigh quotient pointwise inside of integrals to obtain:
\begin{equation}
  \| \MM w \|^2_{\mca}
  = \int_\Omega \left( \mca \MM w \right) \cdot \MM w \, dx
  \geq \int_\Omega \lambda_1 \left( \MM w \cdot \MM w \right)
  = \lambda_1 \| \MM w \|^2,
\end{equation}
with a similar upper bound of $\| \MM w \|^2_{\mca} \leq \lambda_1 \| \MM w \|^2.$

Using this observation, 
\begin{equation}
  \label{eq:boobound}
\lambda_N \left( \| \MM u \|^2_{\mcm} + k^2 Fr^2 \| \nabla \cdot \MM u \|^2 \right)
\leq 
c(\MM u, \MM u)
\leq \lambda_1 \left( \| \MM u \|^2_{\mcm} + k^2 Fr^2 \| \nabla \cdot \MM u \|^2 \right),
\end{equation}
so that an equivalence between $c$ and $\hat{c}$ holds with a condition number of $\lambda_1 / \lambda_N$, which is quadratic in the number of layers.
With more careful consideration, however, are able to prove a tighter bound.

In this analysis, we will make the \emph{inverse assumption} that there exists some $C_I > 0$, independent of $\MM u$ and $h$ such that
\begin{equation}
    \label{eq:inverse}
    \| \nabla \cdot \MM u \| \leq \tfrac{C_I}{h} \| \MM u \|_\mcm
\end{equation}
holds for all $\MM u \in \MM V^N_h$ with some $C_I > 0$ independent of $\MM u$.
This estimate is a theorem for standard $H^1$ polynomial spaces~\cite{Brenner:2008} and is commonly made assumption for $\hdiv$ spaces.  In our case, it follows from the standard $\hdiv$ inverse assumption in each component plus the equivalence of $\| \cdot \|_\mcm$ to the $N$-way $L^2$ inner product.

To simplify our notation, we introduce the quantity
\begin{equation}
  q = C_I k Fr.
\end{equation}

\begin{theorem}
  \label{thm:noa}
  For all $\mathbf{u} \in \MM V^N_h$, the equivalence
  \begin{equation}
    \chi_0 \hat{c}(\mathbf{u}, \mathbf{u})
    \leq c(\mathbf{u}, \mathbf{u}) \leq \chi_1 \hat{c}(\mathbf{u}, \mathbf{u})
  \end{equation}
  holds, where
  \begin{equation}
      \chi_0 = \frac{\lambda_N q^2 + h^2}{q^2 + h^2}, \ \ \
      \chi_1 = \frac{\lambda_1 q^2 + h^2}{q^2 + h^2}.
  \end{equation}
\end{theorem}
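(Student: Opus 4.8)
The plan is to reduce the two-sided estimate to a single scalar inequality that the inverse assumption~\eqref{eq:inverse} delivers essentially for free. Write $x = \|\MM u\|_\mcm^2$ and $y = Fr^2 k^2 \|\nabla \cdot \MM u\|^2$, both nonnegative, so that $\hat{c}(\MM u, \MM u) = x + y$ identically. Two ingredients feed the argument. First, the pointwise Rayleigh-quotient bound already used to obtain~\eqref{eq:boobound}, namely $\lambda_N \|\nabla \cdot \MM u\|^2 \le \|\nabla \cdot \MM u\|_\mca^2 \le \lambda_1 \|\nabla \cdot \MM u\|^2$, which upon multiplying by $Fr^2 k^2$ gives $x + \lambda_N y \le c(\MM u, \MM u) \le x + \lambda_1 y$. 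Second, the inverse assumption~\eqref{eq:inverse}: squaring it and multiplying by $Fr^2 k^2$ yields $h^2 y \le q^2 x$ with $q = C_I k Fr$.

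For the upper bound it suffices to show $x + \lambda_1 y \le \chi_1 (x + y)$. For $\MM u \ne 0$ we have $x + y > 0$ (as $\mcm$ is positive definite), so clearing the positive denominator $q^2 + h^2$ and cancelling the common monomials reduces this inequality to $(\lambda_1 - 1)(q^2 x - h^2 y) \ge 0$. The factor $q^2 x - h^2 y$ is nonnegative by the inverse estimate, and $\lambda_1 \ge 1$ — equivalently, we work under the normalization for which $\lambda_1 \ge N \rho_1 \ge 1$, cf.~\eqref{eq:lam1bound}. The lower bound is the mirror image: reducing $\chi_0(x+y) \le x + \lambda_N y$ the same way produces $(1 - \lambda_N)(q^2 x - h^2 y) \ge 0$, which holds since $\lambda_N \le 1$, a consequence of the smallness of the inter-layer density gaps and the bound $\lambda_N \le \tfrac{3}{10}\delta \rho^*$ in~\eqref{eq:lamnbound}. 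The case $\MM u = 0$ is trivial.

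The calculation is essentially routine; the one point that requires a moment's thought is recognizing that the inverse assumption is exactly the ingredient that converts the crude, mesh-independent equivalence~\eqref{eq:boobound} (with constants $\lambda_N$ and $\lambda_1$) into the sharper mesh-dependent constants $\chi_0,\chi_1$: when $q^2/h^2$ is small (coarse mesh, or small time step) both $\chi_0$ and $\chi_1$ are close to $1$, so $c \approx \hat{c}$, while as $h \to 0$ one recovers $\chi_0 \to \lambda_N$ and $\chi_1 \to \lambda_1$. The only bookkeeping care needed is tracking the direction of each inequality when clearing the positive denominators $q^2 + h^2$ and $x + y$.
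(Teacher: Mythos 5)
Your proof is correct, and it reaches the same bound by a cleaner, more direct route. Where the paper uses a Young's-inequality-style splitting with an auxiliary parameter $\alpha$ (writing $\|\nabla\cdot\MM u\|^2$ or $\|\MM u\|^2_\mcm$ as $\alpha(\cdot) + (1-\alpha)(\cdot)$, applying the inverse estimate to the $\alpha$-weighted piece, and then choosing $\alpha$ to balance the two coefficients), you simply set $x=\|\MM u\|_\mcm^2$, $y=Fr^2k^2\|\nabla\cdot\MM u\|^2$, observe $h^2 y\le q^2 x$, and reduce the desired inequalities to the two-term identities $(\lambda_1-1)(q^2x-h^2y)\ge 0$ and $(1-\lambda_N)(q^2x-h^2y)\ge 0$. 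These are algebraically equivalent derivations of the same constants, but yours avoids the optimization step and makes explicit where the inverse estimate enters.

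The one thing your proof surfaces that the paper glosses over is the sign requirement $\lambda_N\le 1\le\lambda_1$. This is genuinely needed: if $\nabla\cdot\MM u = 0$, then $c(\MM u,\MM u)=\hat c(\MM u,\MM u)$, so any valid $\chi_0,\chi_1$ must straddle $1$, which forces $\lambda_N\le 1\le\lambda_1$. The paper's argument carries the same hidden requirement, since its optimal $\alpha$ must lie in $[0,1]$ for the splitting to preserve the inequality direction, and $\alpha\in[0,1]$ is equivalent to $\lambda_1\ge 1$ (upper bound) and $\lambda_N\le 1$ (lower bound). Your appeal to $\lambda_1\ge N\rho_1\ge 1$ and to the smallness bound $\lambda_N\le\tfrac{3}{10}\delta\rho^*$ is the right justification, though it tacitly invokes the nondimensional normalization $\rho_1\approx 1$; it would be worth stating that dependence cleanly, since as written the theorem has no explicit hypothesis ensuring $\lambda_N\le 1\le\lambda_1$.
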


\begin{proof}
  We first prove the upper bound involving $\chi_1$,
  applying the Rayleigh quotient for $\mathcal{A}$ pointwise to obtain
  \begin{equation}
    c(\MM u, \MM u) \leq \| \MM u \|_\mcm^2
    + \lambda_1 Fr^2 k^2 \| \nabla \cdot \MM u \|^2.
  \end{equation}
  Next, for some $0 \leq \alpha \leq 1$ to be specified, we split the $\| \nabla \cdot \MM u \|^2$ term
  \begin{equation}
    c(\MM u, \MM v) 
    \leq \| \MM u \|_\mcm^2
    + \alpha \lambda_1 Fr^2 k^2 \| \nabla \cdot \MM u \|^2
    + (1-\alpha) \lambda_1 Fr^2 k^2 \| \nabla \cdot \MM u \|^2,
  \end{equation}
  and using the inverse assumption~\eqref{eq:inverse}, we have
  \begin{equation}
    c(\MM u, \MM v) 
    \leq \left( 1 + \tfrac{\alpha \lambda_1 q^2}{h^2} \right)\| \MM u \|_\mcm^2
    + \left(1-\alpha\right) \lambda_1 Fr^2 k^2 \| \nabla \cdot \MM u \|^2.
  \end{equation}
  The best bound here will be obtained if we choose $\alpha$ to equalize the coefficients of the terms appearing in the bilinear form, or
  \[
   1 + \tfrac{\alpha \lambda_1 q^2}{h^2} = \left( 1 - \alpha\right) \lambda_1.
   \]
   This is readily solved to find
   \begin{equation}
     \alpha = \frac{\left( \lambda_1 - 1 \right)h^2}{\lambda_1\left( h^2 + q^2 \right)}.
   \end{equation}
   So then, the coefficient of $k^2 Fr^2 \| \nabla \cdot \MM u \|^2$ in our estimate is $\alpha_1 \lambda_1$, which is equal to the claimed value $\chi_1$.
   The coefficient of $\| \MM u \|_\mcm^2$ must have the same value, completing the upper bound.

   Now, we consider the lower bound, which begins in the same way, using the lower bound on the Rayleigh quotient to write
   \begin{equation}
    c(\MM u, \MM v) \geq \| \MM u \|_\mcm^2
    + \lambda_N Fr^2 k^2 \| \nabla \cdot \MM u \|^2.
   \end{equation}
   Now, we additively split the $L^2$ term with some $0 < \alpha < 1$:
  \begin{equation}
    c(\MM u, \MM u) \geq (1-\alpha) \| \MM u \|_{\mcm}^2
    + \alpha \| \MM u \|_{\mcm}^2
    + \lambda_N k^2 Fr^2 \| \nabla \cdot \MM u \|^2.
  \end{equation}
  Now, we rearrange the inverse assumption to bound $\| \MM u \|_\mcm$ below by $\tfrac{h}{C_I} \| \nabla \cdot \MM u \|$
  \begin{equation}
    \begin{split}
    c(\MM u, \MM u) & \geq (1-\alpha) \| \MM u \|_{\mcm}^2
    + \left( \tfrac{\alpha h^2}{C_I^2} + k^2 Fr^2 \lambda_N \right) \left\| \nabla \cdot \MM u \right\|^2 \\
    & = (1-\alpha) \| \MM u \|_{\mcm}^2
    + \left( \tfrac{\alpha h^2}{q^2} + \lambda_N \right) k^2 Fr^2 \left\| \nabla \cdot \MM u \right\|^2,
    \end{split}
  \end{equation}
  Again, the optimal choice of $\alpha$ will balance the coefficients, so we solve
  \[
  1 - \alpha = \tfrac{\alpha h^2}{q^2} + \lambda_N
  \]
  to find
  \begin{equation}
    \alpha = \frac{q^2 \left( 1 - \lambda_N \right)}{q^2 + h^2},
  \end{equation}
  so that $1 - \alpha = \chi_0$ as claimed.
\end{proof}

This theorem shows a somewhat complex relationship between the physical and discretization parameters and the equivalence bounds obtained by neglecting the inter-layer coupling.
The lower bound is somewhat simpler to unpack.  Since $\lambda_N > 0$ but decays like $1/N$, we always have
\[
\chi_0 \geq \frac{h^2}{q^2 + h^2},
\]
which is \emph{independent} of the number of layers.
Fixing $h$ and letting $q$ (here, a proxy for the time step) become small presents no problems.
On the other hand, keeping a nondegenerate lower bound when $h \rightarrow 0$ also requires $q \rightarrow 0$ at a comparable rate.

The asymptotics of the upper bound are a bit different.
We have that $\lambda_1 = \mathcal{O}(N)$ as we increase the number of layers.  However, this only makes $\chi_1/\chi_0 = \mathcal{O}(N)$ rather than the naive $\mathcal{O}(N)^2$ posited initially.
Also, for a fixed number of layers, two comments are in order.
First, we always have
$\chi_1 < \lambda_1$.
Second, we can decrease the effect of large $\lambda_1$ by reducing the time step relative to the mesh size, for
\[
\chi_1 = \frac{\lambda_1 q^2 + h^2}{q^2 + h^2}
= \frac{\lambda_1 \left(\frac{q}{h}\right)^2 + 1}{\left(\frac{q}{h}\right)^2 + 1}.
\]

\subsection{A block tridiagonal reformulation}
Neglecting the inter-layer coupling in our preconditioner is better than initially thought, and performs well for practical numbers of layers.
Here, we sketch an alternate approach that should also sparsify the preconditioner while maintaining the layer-independence.
This approach relies heavily on the tridiagonal inverse of the coupling matrix $\mca$.

For the bilinear form $c$ from~\eqref{eq:blc} and some bounded linear functional $f \in (\MM V_h^N)^\prime$, consider the variational problem
\begin{equation}
\label{pcstart}
    c(\MM u, \MM v) = f(\MM v), \ \ \ \MM v \in V_h^N
\end{equation}
Using~\eqref{eq:ldlt} in this, we write
\begin{equation}
\mca = \mcc^{-1} = 
\left( \mcl \mcd \mcl^T \right)^{-1}
= \mcl^{-T} \mcd^{-1} \mcl^{-1},
\end{equation}
so that
\begin{equation}
\begin{split}
    c(\MM u, \MM v)
    & = (\mcm \MM u, \MM v) + Fr^2 k^2 ( \mcl^{-T} \mcd^{-1} \mcl^{-1} \nabla \cdot \MM u , \nabla \cdot \MM v) \\
    & = (\mcm \MM u, \MM v) + Fr^2 k^2 (  \mcd^{-1} \nabla \cdot \mcl^{-1} \MM u , \nabla \cdot \mcl^{-1} \MM v).
\end{split}
\end{equation}
Now, we introduce auxilary variables $\widetilde{\MM u} = \mcl^{-1} \MM u$ and $\widetilde{\MM v} = \mcl^{-1} \MM v$ and define the matrix $ \widetilde{\mcc} = \mcl^T \mcm \mcl$.  This varies spatially, but is pointwise tridiagonal.  With these substitutions, we have
\begin{equation}
  \begin{split}
    c(\MM u, \MM v) 
    & = (\mcm \mcl \widetilde{\MM u}, \mcl \widetilde{\MM v}) + Fr^2 k^2 (  \mcd^{-1} \nabla \cdot \widetilde{ \MM u} , 
    \nabla \cdot \widetilde{\MM v})  \\
    & = (\widetilde{\mcc} \widetilde{\MM u},  \widetilde{\MM v}) + Fr^2 k^2 (  \mcd ^{-1} \nabla \cdot \widetilde{ \MM u} , 
    \nabla \cdot \widetilde{\MM v}) \\
    & \equiv c(\widetilde{\MM u}, \widetilde{\MM v}) 
\end{split}
\end{equation}
Now, we can write~\eqref{pcstart} as 
\begin{equation}
\label{eq:pc2}
    \tilde{c}\left(\widetilde{\MM u}, \widetilde{\MM v}\right) = \tilde{f}(\widetilde{\MM v}).
\end{equation}
Hence, one could change variables and solve a sparser system, in which only adjacent layers are coupled through the tridiagonal matrix $\widetilde{\mcc}$, although this requires considerable care in the implementation.

\section{Numerical results}
We have implemented a mixed finite element discretization of the tide
model and developed all of our preconditioners within the Firedrake
framework~\cite{Rathgeber:2016}.  Firedrake is an automated system for
the solution of PDE using the finite element method.  It generates
efficient low-level code from the
Unifed Form Language (UFL) in Python~\cite{alnaes2014unified},
and interfaces tightly with PETSc for scalable algebraic solvers.
Firedrake also has a rich ability to interoperate with and extend
PETSc~\cite{kirby2018solver}, which facilites the definition of
auxiliary bilinear forms needed for weighted norm preconditioning.
Morever, a facility to generate Runge-Kutta methods from a
semi-discrete formulation was recently added to Firedrake through the
Irksome project~\cite{farrell2020irksome}, and we use this to
obtain the implicit midpoint rule.

Our numerical experiments primarily consist of testing
preconditioners as a function of discretization and physical parameters.
We discretize the problem on the unit square by taking an $N_x \times N_y$ mesh subdivided into
right triangles and use lowest-order Raviart-Thomas elements for $\MM
u_h$ and piecewise constants for $\MM \eta_h$.
In all our cases, we solve the resulting linear systems using
unrestarted GMRES with right preconditioning.
We chose the right-hand side by choosing an initial condition for the IBVP at rest but for a small disturbance in the top layer and taking one step of the implicit midpoint rule using Irksome.
We iterated to the PETSc default
relative tolerance of $10^{-5}$, which is appropriate for the
low-order time and space discretizations under consideration.  In
certain cases, we found it necessary to use modified Gram-Schmidt
orthogonalization, and so we used it throughout.
Our techniques are not particular to the Raviart-Thomas elements or triangles.  Much as in~\cite{kirby2021preconditioning}, we have also performed our experiments on rectangular Raviart-Thomas elements and trimmed serendipity elements~\cite{crum2022bringing, gillette2019computational} with very similar results.  

As point of reference, we will compare the weighted-norm
preconditioners under consideration to a standard incomplete LU
factorization method~\cite{saad2003iterative} with no fill.
(Firedrake natively stores the momentum and elevation variables separately,
but PETSc performs nested dissection to reorder the unknowns before performing the factorizations.)
For wave-like equations with a reasonable time step and moderate physical parameters,
this is not a terrible approach.
We refer to the two plots in Figure~\ref{fig:ilu}.
Both plots fix 5 layers with equidistributed densities between 1.03 and 1.06 and Rossby number $\epsilon = 1$.
In the first plot, we vary the Froude number and in the second, we vary the CFL number $\Delta t/N$.
In both cases, we have mesh independence, but we see a wide range of variation with respect to the physical and discretization parameters.

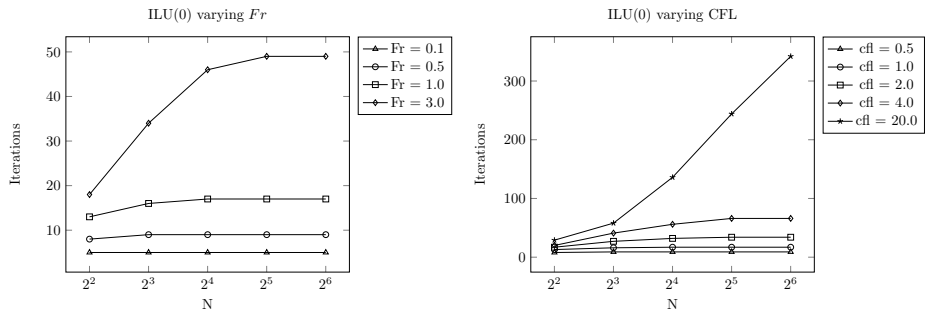
\begin{figure}
  \begin{subfigure}[c]{0.475\textwidth}
    \begin{tikzpicture}[scale=0.55]
      \begin{axis}[title = {ILU(0) varying $Fr$},
          xmode = log,
          log basis x={2},
          xlabel = {N}, ylabel = {Iterations},
          legend pos = outer north east]
        \addlegendentry{Fr = 0.1}
        \addplot[mark = triangle] table [x = {N}, y = 0.1, col sep = comma] {rt_ilu_fr.csv};
        
        \addlegendentry{Fr = 0.5}
        \addplot[mark = o] table [x = {N}, y = 0.5, col sep = comma] {rt_ilu_fr.csv};
        
        \addlegendentry{Fr = 1.0}
        \addplot[mark = square] table [x = {N}, y = 1.0, col sep = comma] {rt_ilu_fr.csv};
        
        \addlegendentry{Fr = 3.0}
        \addplot[mark = diamond] table [x = {N}, y = 3.0, col sep = comma] {rt_ilu_fr.csv};
        \end{axis}
        \end{tikzpicture}
  \end{subfigure}
  \begin{subfigure}[c]{0.475\textwidth}
	\begin{tikzpicture}[scale=0.55]
	\begin{axis}[title = {ILU(0) varying CFL},
	xmode = log,
	log basis x={2},
	xlabel = {N}, ylabel = {Iterations},
	legend pos = outer north east]
	\addlegendentry{cfl = 0.5}
	\addplot[mark = triangle] table [x = {N}, y = 0.5, col sep = comma] {rt_ilu_cfl.csv};
	
	\addlegendentry{cfl = 1.0}
	\addplot[mark = o] table [x = {N}, y = 1.0, col sep = comma] {rt_ilu_cfl.csv};
	
	\addlegendentry{cfl = 2.0}
	\addplot[mark = square] table [x = {N}, y = 2.0, col sep = comma] {rt_ilu_cfl.csv};
	
	\addlegendentry{cfl = 4.0}
	\addplot[mark = diamond] table [x = {N}, y = 4.0, col sep = comma] {rt_ilu_cfl.csv};
	
	\addlegendentry{cfl = 20.0}
	\addplot[mark = star] table [x = {N}, y = 20.0, col sep = comma] {rt_ilu_cfl.csv};
	
	\end{axis}
	\end{tikzpicture}
  \end{subfigure}
  \caption{Performance of ILU(0) preconditioner as a function of mesh parameters for various Froude and CFL numbers.  Throught, we fix $\epsilon = 1$ and consider 5 layers with densities varying between 1.03 and 1.06.
    We see eventual mesh independence, but the number of iterations varies considerably with fixing the CFL number as 1 and varying the Froude number (left) or fixing $Fr=1$ and varying the CFL number (right).}
  \label{fig:ilu}
\end{figure}

We repeat these same experiments, now with the weighted-norm
preconditioner we proposed in~\eqref{eq:precond}.
Applying this preconditioner requires at least approximately inverting the block diagonal matrix.
The best (in terms of iteration count) we can hope for is obtained if those blocks are in fact inverted exactly.  The bottom right block is diagonal for lowest-order elements and hence trivial to invert.
For the top left block, we compute a sparse LU factorization in a setup phase and perform solves with this at each iteration.
We can compare Figure~\ref{fig:wtdnrm} to those in~\ref{fig:ilu} and see the potential benefit of our new preconditioner.
Although we see some variation with respect to the Froude and CFL numbers, we seem to approach a relatively small and mesh-independent bound, even for rather extreme parameter values.

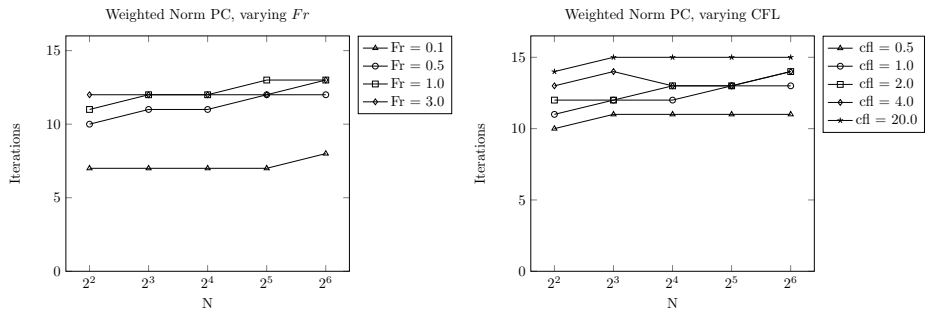
\begin{figure}
  \begin{subfigure}[c]{0.475\textwidth}
	\begin{tikzpicture}[scale=0.55]
	\begin{axis}[title = {Weighted Norm PC, varying $Fr$},
	xmode = log,
	log basis x={2}, ymin=0, ymax=16,
	xlabel = {N}, ylabel = {Iterations},
	legend pos = outer north east]
	\addlegendentry{Fr = 0.1}
	\addplot[mark = triangle] table [x = {N}, y = 0.1, col sep = comma] {rt_wtd_nrm_lu_fr.csv};
	
	\addlegendentry{Fr = 0.5}
	\addplot[mark = o] table [x = {N}, y = 0.5, col sep = comma] {rt_wtd_nrm_lu_fr.csv};
	
	\addlegendentry{Fr = 1.0}
	\addplot[mark = square] table [x = {N}, y = 1.0, col sep = comma] {rt_wtd_nrm_lu_fr.csv};
	
	\addlegendentry{Fr = 3.0}
	\addplot[mark = diamond] table [x = {N}, y = 3.0, col sep = comma] {rt_wtd_nrm_lu_fr.csv};
	
	\end{axis}
	\end{tikzpicture}
  \end{subfigure}
  \begin{subfigure}[c]{0.475\textwidth}
	\begin{tikzpicture}[scale=0.55]
	\begin{axis}[title = {Weighted Norm PC, varying CFL},
	xmode = log,
	log basis x={2}, ymin = 0,
	xlabel = {N}, ylabel = {Iterations},
	legend pos = outer north east]
	\addlegendentry{cfl = 0.5}
	\addplot[mark = triangle] table [x = {N}, y = 0.5, col sep = comma] {rt_wtd_nrm_lu_cfl.csv};
	
	\addlegendentry{cfl = 1.0}
	\addplot[mark = o] table [x = {N}, y = 1.0, col sep = comma] {rt_wtd_nrm_lu_cfl.csv};
	
	\addlegendentry{cfl = 2.0}
	\addplot[mark = square] table [x = {N}, y = 2.0, col sep = comma] {rt_wtd_nrm_lu_cfl.csv};
	
	\addlegendentry{cfl = 4.0}
	\addplot[mark = diamond] table [x = {N}, y = 4.0, col sep = comma] {rt_wtd_nrm_lu_cfl.csv};
	
	\addlegendentry{cfl = 20.0}
	\addplot[mark = star] table [x = {N}, y = 20.0, col sep = comma] {rt_wtd_nrm_lu_cfl.csv};
	
	\end{axis}
	\end{tikzpicture}    
  \end{subfigure}
  \caption{Performance of the preconditioner~\eqref{eq:precond} using exact inversion of the blocks.  Parameters are the same as in Figure~\ref{fig:ilu}.}
  \label{fig:wtdnrm}
\end{figure}

However, for scaling to very large problems, it is important to consider ways of bypassing sparse factorizations.
A simple strategy for this is to replace the inversion of the top left block with a simple ILU(0) factorization, and we repeat the experiments from Figures~\ref{fig:ilu} and~\ref{fig:wtdnrm} using this choice in Figure~\ref{fig:wtdnrmilu}.
As expected, we lose some parameter robustness, but this could still give a practical result.
These plots show that even running at CFL number requires only about 20 iterations per time step, and ILU(0) costs about as much as a matrix-vector product to apply.
We remark that some adaptation of $\hdiv$ multigrid~\cite{Arnold1997} could recover parameter robustness at the cost of more expensive iterations.

\begin{figure}
  \begin{subfigure}[c]{0.475\textwidth}
	\begin{tikzpicture}[scale=0.55]
	\begin{axis}[title = {Weighted Norm/ILU varying $Fr$},
	xmode = log,
	log basis x={2},
	xlabel = {N}, ylabel = {Iterations},
	legend pos = outer north east]
	\addlegendentry{Fr = 0.1}
	\addplot[mark = triangle] table [x = {N}, y = 0.1, col sep = comma] {rt_wtd_nrm_ilu_fr.csv};
	
	\addlegendentry{Fr = 0.5}
	\addplot[mark = o] table [x = {N}, y = 0.5, col sep = comma] {rt_wtd_nrm_ilu_fr.csv};
	
	\addlegendentry{Fr = 1.0}
	\addplot[mark = square] table [x = {N}, y = 1.0, col sep = comma] {rt_wtd_nrm_ilu_fr.csv};
	
	\addlegendentry{Fr = 3.0}
	\addplot[mark = diamond] table [x = {N}, y = 3.0, col sep = comma] {rt_wtd_nrm_ilu_fr.csv};
	
	\end{axis}
	\end{tikzpicture}    
  \end{subfigure}
  \begin{subfigure}[c]{0.475\textwidth}
	\begin{tikzpicture}[scale=0.55]
	\begin{axis}[title = {Weighted Norm/ILU varying CFL},
	xmode = log,
	log basis x={2},
	xlabel = {N}, ylabel = {Iterations},
	legend pos = outer north east]
	\addlegendentry{cfl = 0.5}
	\addplot[mark = triangle] table [x = {N}, y = 0.5, col sep = comma] {rt_wtd_nrm_ilu_cfl.csv};
	
	\addlegendentry{cfl = 1.0}
	\addplot[mark = o] table [x = {N}, y = 1.0, col sep = comma] {rt_wtd_nrm_ilu_cfl.csv};
	
	\addlegendentry{cfl = 2.0}
	\addplot[mark = square] table [x = {N}, y = 2.0, col sep = comma] {rt_wtd_nrm_ilu_cfl.csv};
	
	\addlegendentry{cfl = 4.0}
	\addplot[mark = diamond] table [x = {N}, y = 4.0, col sep = comma] {rt_wtd_nrm_ilu_cfl.csv};
	
	\addlegendentry{cfl = 20.0}
	\addplot[mark = diamond] table [x = {N}, y = 20.0, col sep = comma] {rt_wtd_nrm_ilu_cfl.csv};
	
	\end{axis}
	\end{tikzpicture}    
  \end{subfigure}  
  \caption{Repeating the experiments in Figure~\ref{fig:wtdnrm}, but with the inverse of the top left block approximated by ILU(0).  We see an increase in iteration count, and greater parameter dependence.  At moderate parameter values the increased iteration count is relatively small.}
  \label{fig:wtdnrmilu}
\end{figure}
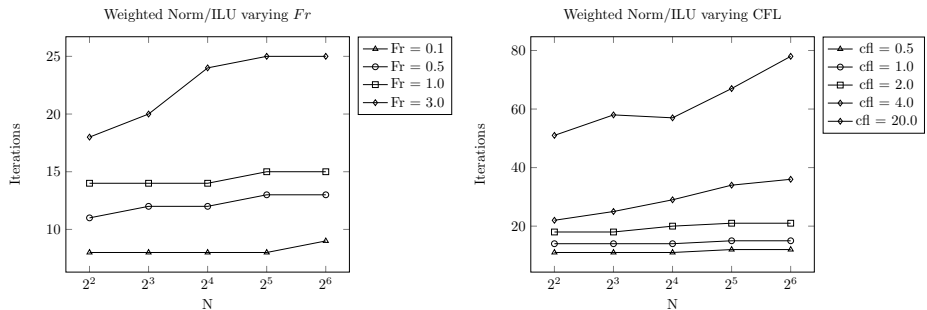

We also repeat these experiments using the decoupled preconditioner suggested in~\eqref{eq:noa} in the upper left block.
Again, we present iteration counts exactly inverting the blocks in Figure~\ref{fig:wtdnrm} and using ILU(0) in the top left block in Figures~\ref{fig:wtdnrmilu}.
Perhaps unsurprisingly, we lose some parameter robustness with respect to the Froude and CFL numbers, but our iteration counts are only about 2-4 times as large as the respective iteration counts in Figures~\ref{fig:wtdnrm} and~\ref{fig:wtdnrmilu}.
The much-reduced sparsity of the preconditioner and hence its ILU(0) factorization could compensate for that increase.

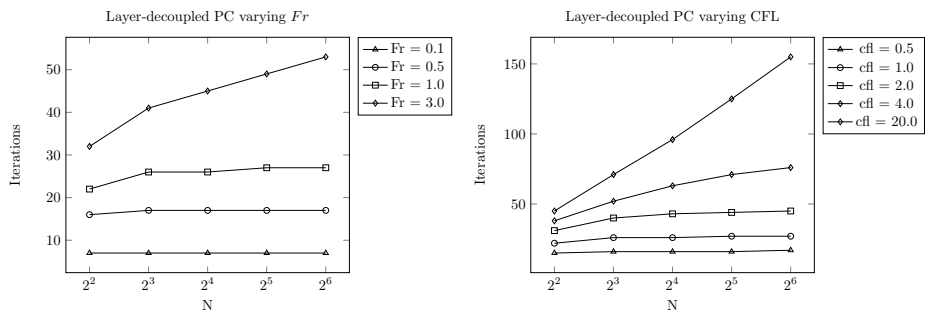
\begin{figure}
  \begin{subfigure}[c]{0.475\textwidth}
	\begin{tikzpicture}[scale=0.55]
	\begin{axis}[title = {Layer-decoupled PC varying $Fr$}, 
	xmode = log,
	log basis x={2},
	xlabel = {N}, ylabel = {Iterations},
	legend pos = outer north east]
	\addlegendentry{Fr = 0.1}
	\addplot[mark = triangle] table [x = {N}, y = 0.1, col sep = comma] {rt_noA_lu_fr.csv};
	
	\addlegendentry{Fr = 0.5}
	\addplot[mark = o] table [x = {N}, y = 0.5, col sep = comma] {rt_noA_lu_fr.csv};
	
	\addlegendentry{Fr = 1.0}
	\addplot[mark = square] table [x = {N}, y = 1.0, col sep = comma] {rt_noA_lu_fr.csv};
	
	\addlegendentry{Fr = 3.0}
	\addplot[mark = diamond] table [x = {N}, y = 3.0, col sep = comma] {rt_noA_lu_fr.csv};
	
	\end{axis}
	\end{tikzpicture}
  \end{subfigure}
  \begin{subfigure}[c]{0.475\textwidth}
	\begin{tikzpicture}[scale=0.55]
	\begin{axis}[title = {Layer-decoupled PC varying CFL}, 
	xmode = log,
	log basis x={2},
	xlabel = {N}, ylabel = {Iterations},
	legend pos = outer north east]
	\addlegendentry{cfl = 0.5}
	\addplot[mark = triangle] table [x = {N}, y = 0.5, col sep = comma] {rt_noA_lu_cfl.csv};
	
	\addlegendentry{cfl = 1.0}
	\addplot[mark = o] table [x = {N}, y = 1.0, col sep = comma] {rt_noA_lu_cfl.csv};
	
	\addlegendentry{cfl = 2.0}
	\addplot[mark = square] table [x = {N}, y = 2.0, col sep = comma] {rt_noA_lu_cfl.csv};
	
	\addlegendentry{cfl = 4.0}
	\addplot[mark = diamond] table [x = {N}, y = 4.0, col sep = comma] {rt_noA_lu_cfl.csv};
	
	\addlegendentry{cfl = 20.0}
	\addplot[mark = diamond] table [x = {N}, y = 20.0, col sep = comma] {rt_noA_lu_cfl.csv};
	
	\end{axis}
	\end{tikzpicture}
  \end{subfigure}    
  \caption{Performance of the weighted norm preconditioner using the simplied form~\eqref{eq:noa} in the top left block. Exact inversion of the blocks.}
  \label{fig:wtdnrmnoa}
\end{figure}

\begin{figure}
    \begin{subfigure}[c]{0.475\textwidth}
	\begin{tikzpicture}[scale=0.55]
	\begin{axis}[title = {Layer-decoupled PC/ILU varying $Fr$}, 
	xmode = log,
	log basis x={2},
	xlabel = {N}, ylabel = {Iterations},
	legend pos = outer north east]
	\addlegendentry{Fr = 0.1}
	\addplot[mark = triangle] table [x = {N}, y = 0.1, col sep = comma] {rt_noA_ilu_fr.csv};
	
	\addlegendentry{Fr = 0.5}
	\addplot[mark = o] table [x = {N}, y = 0.5, col sep = comma] {rt_noA_ilu_fr.csv};
	
	\addlegendentry{Fr = 1.0}
	\addplot[mark = square] table [x = {N}, y = 1.0, col sep = comma] {rt_noA_ilu_fr.csv};
	
	\addlegendentry{Fr = 3.0}
	\addplot[mark = diamond] table [x = {N}, y = 3.0, col sep = comma] {rt_noA_ilu_fr.csv};
	
	\end{axis}
	\end{tikzpicture}
  \end{subfigure}
  \begin{subfigure}[c]{0.475\textwidth}
	\begin{tikzpicture}[scale=0.55]
	\begin{axis}[title = {Layer-decoupled PC/ILU varying CFL}, 
	xmode = log,
	log basis x={2},
	xlabel = {N}, ylabel = {Iterations},
	legend pos = outer north east]
	\addlegendentry{cfl = 0.5}
	\addplot[mark = triangle] table [x = {N}, y = 0.5, col sep = comma] {rt_noA_ilu_cfl.csv};
	
	\addlegendentry{cfl = 1.0}
	\addplot[mark = o] table [x = {N}, y = 1.0, col sep = comma] {rt_noA_ilu_cfl.csv};
	
	\addlegendentry{cfl = 2.0}
	\addplot[mark = square] table [x = {N}, y = 2.0, col sep = comma] {rt_noA_ilu_cfl.csv};
	
	\addlegendentry{cfl = 4.0}
	\addplot[mark = diamond] table [x = {N}, y = 4.0, col sep = comma] {rt_noA_ilu_cfl.csv};
	
	\addlegendentry{cfl = 20.0}
	\addplot[mark = diamond] table [x = {N}, y = 20.0, col sep = comma] {rt_noA_ilu_cfl.csv};
	
	\end{axis}
	\end{tikzpicture}
  \end{subfigure}
  \caption{Performance of the decoupled preconditioner using ILU(0) for the top left block.}
  \label{fig:wtdnrmnoailu}
\end{figure}
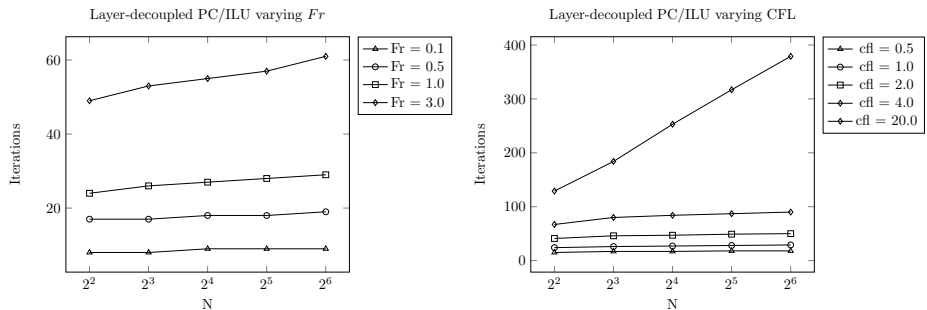

Now, we want to comment on the dependence of the preconditioners as a function of the number of layers.
For this, we fixed a $N \times N$ mesh with $N=64$ divided into triangles, fixed $Fr=\epsilon=$ and $dt = 2/N = 0.03125$ and considered the number of iterations required to solve the linear system with various preconditioners -- ILU on the original system and preconditioners~\eqref{eq:precond}and~\eqref{eq:noa}, alternately using exact inversion or an ILU approximation of the top-left block
These results are shown in Figure~\ref{fig:layers}.
None of these methods show signicant variation as we increase the number of layers.
This behavior for~\eqref{eq:precond} is not unexpected in light of
Theorems~\ref{thm:upper} and~\ref{thm:lower}, but is better than one expects given Theorem~\ref{thm:noa}.

\begin{figure}
  \begin{center}
\begin{tikzpicture}[scale=0.55]
  \begin{axis}[title = {Layer Dependence},
      xlabel = {Layers}, ylabel = {Iterations},
      legend pos = outer north east]
    
    \addlegendentry{ILU}
    \addplot[mark = triangle] table [x = {Nlayers}, y = {ilu}, col sep = comma] {layers_vs_its_2.csv};

    \addlegendentry{Wtd Norm}
    \addplot[mark = square] table [x = {Nlayers}, y = {fs_riesz_lu}, col sep = comma] {layers_vs_its_2.csv};
    
    \addlegendentry{Layer-decoupled}
    \addplot[mark = o] table [x = {Nlayers}, y = {fs_riesz_noA_lu}, col sep = comma] {layers_vs_its_2.csv};

    \addlegendentry{Wtd Norm/ILU}
    \addplot[mark = *] table [x = {Nlayers}, y = {fs_riesz_ilu}, col sep = comma] {layers_vs_its_2.csv};
    
    \addlegendentry{Layer-decoupled/ILU}
    \addplot[mark = diamond] table [x = {Nlayers}, y = {fs_riesz_noA_ilu}, col sep = comma] {layers_vs_its_2.csv};
    
  \end{axis}
\end{tikzpicture}
\end{center}
\caption{Iteration count as a function of the number of layers.  We see that the preformance of our preconditioners seems bounded as we increase the number of layers, a result better than that predicted in Theorem~\ref{thm:noa}.}
\label{fig:layers}
\end{figure}
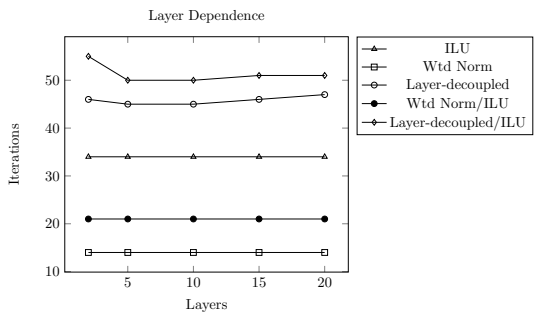

\section{Conclusions and future work}
We have presented a new tide model based on the linearized rotating shallow water equations, but with several layers stratified by density.
A mixed finite element discretization similar to that for single-layer models~\cite{CoKi} gives rise to a large system of equations, with additional complexity arising from the all-to-all coupling between the layers.
We have presented and analyzed weighted-norm preconditioners that are robust with respect to most of the physical and discretization parameters.
For typical parameter values, additional approximations such as neglecting inter-layer coupling and approximating inverses of matrix blocks with incomplete factorizations may result in highly practical methods.

Future directions for this work would include careful energy-type estimates that sharply describe the physical damping in the system.
These would inform \emph{a priori} estimates like we have previously  derived in the single layer case.
Additionally, adapting such energy estimates to the fully discrete case, as well as studying the systems arising from higher-order temporal methods, present further challenges.

\bibliographystyle{siamplain}
\bibliography{bib}
\end{document}